\def\MT@register@subst@font{\MT@exp@one@n\MT@in@clist\font@name\MT@font@list
   \ifMT@inlist@\else\xdef\MT@font@list{\MT@font@list\font@name,}\fi}
\newcommand{\X}{\mathbf{X}}
\newcommand{\ZZ}{\mathbf{Z}}
\newcommand{\1}{\mathbf{1}}
\newcommand{\Z}{\mathbb{Z}}
\newcommand{\R}{\mathbb{R}}
\newcommand{\N}{\mathbb{N}}
\newcommand{\E}{\mathbb{E}}
\newcommand{\wt}{\widetilde}
\newcommand{\wh}{\widehat}
\newcommand{\wtt}{\overline}
\newcommand{\C}{\mathbb{C}}
\newcommand{\oc}{\overline c}
\newcommand{\tr}{\operatorname{tr}}
\newcommand{\var}{\operatorname{\mathbb{V}ar}}
\newcommand{\dd}{\mathrm{d}}
\newcommand{\ee}{\mathrm{e}}
\newcommand{\ii}{\mathrm{i}}
\newcommand{\T}{\mathsf{T}}
\newcommand{\matchA}{\overbracket[.5pt][5pt]{\:\:}{\,\!}\:\overbracket[.5pt][5pt]{\:\:}{\,\!}}
\newcommand{\matchB}{\overbracket[.5pt][5pt]{\:\:\:\:}{\,\!}\!\!\!\!\!\!\overbracket[.5pt]{\:\:\:\:}{\,\!}}
\newcommand{\matchC}{\overbracket[.5pt][5pt]{\:\:\:\:\,\,}{\,\!}\!\!\!\!\!\!\!\!\!\!\,\overbracket[.5pt]{\:\:}{\,\!}}
\newcommand{\matchD}{\overbracket[.5pt][5pt]{\:\:}{\,\!}\!\!\!\!\:\:\!\overbracket[.5pt][5pt]{\:\:}{\,\!}\!\!\!\!\:\:\!\overbracket[.5pt][5pt]{\:\:}{\,\!}}
\theoremstyle{plain}
\newtheorem{theorem}{Theorem}
\newtheorem{proposition}[theorem]{Proposition}
\newtheorem{lemma}[theorem]{Lemma}
\theoremstyle{definition}
\theoremstyle{remark}
\newtheorem{remark}{Remark}
\title[]{Limiting spectral distribution of a new random matrix model with dependence across rows and columns}
\author{Oliver Pfaffel}
\author{Eckhard Schlemm}
\address{TUM Institute for Advanced Study \& Zentrum Mathematik, Technische Universit\"at M\"unchen, Boltzmannstra\ss e 3, 85748 Garching bei M\"unchen, Germany.}
\thanks{Both authors gratefully acknowledge financial support from Technische Universit\"at M\"unchen - Institute for Advanced Study funded by the German Excellence Initiative and from the International Graduate School of Science and Engineering}
\begin{document}
\begin{abstract}
We introduce a random matrix model where the entries are dependent across both rows and columns. More precisely, we investigate matrices of the form $\X=(X_{(i-1)n+t})_{it}\in\R^{p\times n}$ derived from a linear process $X_t=\sum_j c_j Z_{t-j}$, where the $\{Z_t\}$ are independent random variables with bounded fourth moments. We show that, when both $p$ and $n$ tend to infinity such that the ratio $p/n$ converges to a finite positive limit $y$, the empirical spectral distribution of $p^{-1}\X\X^{\T}$ converges almost surely to a deterministic measure. This limiting measure, which depends on $y$ and the spectral density of the linear process $X_t$, is characterized by an integral equation for its Stieltjes transform. The matrix $p^{-1}\X\X^{\T}$ can be interpreted as an approximation to the sample covariance matrix of a high\hyp{}dimensional process whose components are independent copies of $X_t$.
\end{abstract}
\keywords{eigenvalue distribution; limiting spectral distribution; linear process; random matrix theory; random matrix with dependent entries; sample covariance matrix}

\maketitle

\section{Introduction}
Random matrix theory studies the properties of large random matrices $A=(A_{i,j})_{ij}\in \mathbb{K}^{p\times n}$, for some field $\mathbb{K}$. In this article, the entries $A_{ij}$ are real random variables unless otherwise specified. Commonly, the focus is on asymptotic properties of such matrices as their dimensions tend to infinity. One particularly interesting object of study is the asymptotic distribution of their singular values. Since the squared singular values of $A$ are the eigenvalues of $AA^{\T}$, this is often done by investigating the eigenvalues of $AA^{\T}$, which is called a sample covariance matrix. The spectral characteristics of a $p\times p$ matrix $S$ are conveniently studied via its empirical spectral distribution, which is defined as $F^{S} = p^{-1}\sum_{i=1}^p{\delta_{\lambda_i}}$; here, $\{\lambda_1,\ldots,\lambda_p\}$ are the eigenvalues of $S$, and $\delta_x$ denotes the Dirac measure located at $x$. For some set $B\subset\R$, the figure $F^{S}(B)$ is the number of eigenvalues of $S$ that lie in $B$. The measure $F^{S}$ is considered a random element of the space of probability distributions equipped with the weak topology, and we are interested in its limit as both $n$ and $p$ tend to infinity such that the ratio $p/n$ converges to a finite positive limit $y$.

The first result of this kind can be found in the remarkable paper of Marchenko and Pastur \citep{Marchenko1967}. They showed that $F^{p^{-1}AA^{\T}}$ converges to a non\hyp{}random limiting spectral distribution $\hat F^{p^{-1}AA^{\T}}$ if all $A_{ij}$ are independent, identically distributed, centred random variables with finite fourth moment. Interestingly, the Lebesgue density of $\hat F ^{p^{-1}AA^{\T}}$ is given by an explicit formula which only involves the ratio $y$ and the common variance of $A_{ij}$ and is therefore universal with respect to the distribution of the entries of $A$. Subsequently \citep{wachter1978,yin1986}, the same result was obtained under the weaker moment condition that the entries $A_{ij}$ have finite variance. The requirement that the entries of $A$ be  identically distributed has later been relaxed to a Lindeberg\hyp{}type condition, cf.\ \cref{Z2}. For more details and a comprehensive treatment of random matrix theory we refer the reader to the text books \citet{Anderson2009,bai2010,Mehta2004}.

Recent research has focused on the question to what extent the assumption of independence of the entries of $A$ can be relaxed without compromising the validity of the Marchenko--Pastur law. In \citet{aubrun2006} it was shown that for random matrices $A$ whose rows are independent $\R^n$-valued random variables uniformly distributed on the unit ball of $l_q(\R^n)$, $q>1$, the empirical spectral distribution $F^{p^{-1}AA^{\T}}$ still converges to the same law as in the i.\,i.\,d.\ case. The Marchenko--Pastur law is, however, not stable with respect to more substantial deviations from the independence assumptions.

A very useful tool to characterize the limiting spectral distribution in random matrix models with dependent entries is the Stieltjes transform which, for some measure $\mu$, is defined as the map $s_\mu:\C^+\to\C^+$, $s_\mu(z) = \int_\R(t-z)^{-1}\mu(\dd t)$. A particular, very successful random matrix model exhibiting dependence within the rows was investigated already by \citet{Marchenko1967} and later in greater generality by \citet{pan2010,silverstein1995}: they modelled dependent data as a linear transformation of independent random variables which led to the study of the eigenvalues of random matrices of the form $AHA^{\T}$, where the entries of $A$ are independent, and $H$ is a positive semidefinite population covariance matrix whose spectral distribution converges to a non\hyp{}random limit $\hat F^H$. They found that the Stieltjes transform of the limiting spectral distribution of $p^{-1}AHA^{\T}$ can be characterized as the solution to an integral equation involving only $\hat F^H$ and the ratio $y=\lim p/n$. Another approach,  suggested in \citet{bai2008} and further pursued in \citet{pfaffel2010}, is to model the rows of $A$ independently as stationary linear processes with independent innovations. This structure is interesting because the class of linear processes includes many practically relevant time series models, such as (fractionally integrated) ARMA processes, as special cases. The main result of \citet{pfaffel2010} shows that for this model the limiting spectral distribution depends only on $y$ and the second\hyp{}order properties of the underlying linear process.

All results for independent rows with dependent row entries also hold with minor modifications for the case where $A$ has independent columns with dependent column entries. This is due to the fact that the matrices $AA^{\T}$ and $A^{\T} A$ have the same non\hyp{}zero eigenvalues. 

In contrast, there are only very few results dealing with random matrix models where the entries are dependent across both rows and columns. The case where $A$ is given as the result of a two\hyp{}dimensional linear filter applied to an array of independent complex Gaussian random variables is considered in \citet{hachem2005}. They use the fact that $A$ can be transformed to a random matrix with uncorrelated, non\hyp{}identically distributed entries. Because of the assumption of Gaussianity the entries are in fact independent, and so an earlier result by the same authors \citep{hachem2006} can be used to obtain the asymptotic distribution of the eigenvalues of $p^{-1}AA^*$. In the context of operator\hyp{}valued free probability theory, \citet{speicher2008} succeeded in characterizing the limiting spectral distribution of block Wishart matrices through a quadratic matrix equation for the corresponding operator-valued Stieltjes transform.

A parallel line of research focuses on the spectral statistics of large symmetric or Hermitian square matrices with dependent entries, thus extending Wigner's \citep{wigner1958} seminal result for the i.\,i.\,d.\ case. Models studied in this context include random Toeplitz, Hankel and circulant matrices \citep[][and references therein]{bose2009,Bryc2006,meckes2007} as well as approaches allowing for a more general dependence structure \citep{anderson2008, hofmann2008}.

In \citet{pfaffel2010}, the authors considered sample covariance matrices of high\hyp{}dimensional stochastic processes, the components of which are modelled by independent infinite\hyp{}order moving average processes with identical second\hyp{}order characteristics. In practice, it is often not possible to observe all components of such a high\hyp{}dimensional process, and the sample covariance matrix can then not be computed. To solve this problem when only one component is observed, it seems reasonable to partition one long observation record of that observed component of length $pn$ into $p$ segments of length $n$, and to treat the different segments as if they were records of the unobserved components. We show that this approach is valid and leads to the correct asymptotic eigenvalue distribution of the sample covariance matrix if the components of the underlying process are modelled as independent moving averages. 

We are thus led to investigate a model of random matrices $\X$ whose entries are dependent across both rows and columns, and which is not covered by the results mentioned above. The entries of the random matrix under consideration are defined in terms of a single linear stochastic process, see \cref{section-result} for a precise definition. Without assuming Gaussianity we prove almost sure convergence of the empirical spectral distribution of $p^{-1}\X\X^{\T}$ to a deterministic limiting measure and characterize the latter via an integral equation for its Stieltjes transform, which only depends on the asymptotic aspect ratio of the matrix and the second\hyp{}order properties of the underlying linear process. Our result extends the class of random matrix models for which the limiting spectral distribution can be identified explicitly by a new, theoretically appealing model. It thus contributes to laying the ground for further research into more general random matrix models with dependent, non\hyp{}identically distributed entries.

\paragraph*{\bf Outline}
In \cref{section-result} we give a precise definition of the random matrix model we investigate and state the main result about its limiting spectral distribution. The proof of the main theorem as well as some auxiliary results are presented in \cref{section-proofs}. Finally, in \cref{section-heuristic}, we indicate how our result could be obtained in an alternative way from a similar random matrix model with independent rows.

\paragraph*{\bf Notation}
We use $\E$ and $\var$ to denote expected value and variance. Where convenient, we also write $\mu_{1,X}$ and $\mu_{2,X}$ for the first and second moment, respectively, of a random variable $X$. The symbol $\1_m$, $m$ a natural number, stands for the $m\times m$ identity matrix. For the trace of a matrix $S$ we write $\tr S$. For sequences of matrices $(S_n)_{n}$ we will suppress the dependence on $n$ where this does not cause ambiguity; the sequence of associated spectral distributions is denoted by $F^{S}$, and for their weak limit, provided it exists, we write $\hat F^S$. It will also be convenient to use asymptotic notation: for two sequences of real numbers $(a_n)_n$, $(b_n)_n$ we write $a_n=O(b_n)$ to indicate that there exists a constant $C$ which is independent of $n$, such that $a_n\leq C b_n$ for all $n$. We denote by $\Z$ the set of integers and by $\N$, $\R$, and $\C$ the sets of natural, real, and complex numbers, respectively. $\Im z$ stands for the imaginary part of a complex number $z$, and $\C^+$ is defined as $\{z\in\C:\Im z>0\}$. The indicator of an expression $\mathcal{E}$ is denoted by $I_{\{\mathcal{E}\}}$ and defined to be one if $\mathcal{E}$ is true and zero otherwise.

\section{A new random matrix model}
\label{section-result}
For a sequence $(Z_t)_{t\in\Z}$ of independent real random variables and real coefficients $(c_j)_{j\in\N\cup\{0\}}$, the linear process $(X_t)_{t\in \Z}$ and the $p\times n$ matrix $\X$ are defined by $X_t=\sum_{j=0}^\infty c_jZ_{t-j}$ and
\begin{equation}
\label{eq-defX}
\X = (\X_{i,t})_{it} =(X_{(i-1)n+t})_{it}= \left(\begin{array}{ccc}X_1&\ldots&X_n\\X_{n+1}&\ldots&X_{2n}\\\vdots& & \vdots\\X_{(p-1)n+1}&\ldots&X_{pn}\end{array}\right)\in\R^{p\times n}.
\end{equation}
The interesting feature about this matrix $\X$ is that its entries are dependent across both rows and columns. In contrast to models considered in \citep{bai2008,hachem2006,pfaffel2010}, not all entries far away from each other are asymptotically independent, e.\,g., the correlation between the entries $\X_{i,n}$ and $\X_{i+1,1}$, $i=1,\ldots,p-1$, does not depend on $n$. We will investigate the asymptotic distribution of the eigenvalues of $p^{-1}\X\X^{\T}$ as both $p$ and $n$ tend to infinity such that their ratio $p/n$ converges to a finite, positive limit $y$. We assume that the sequence $(Z_t)_t$ satisfies
\begin{align}
\E Z_t=0,\quad \E Z_t^2=1, \quad\text{ and }\quad \sigma_4\coloneqq\sup_t\E Z_t^4<\infty,
\label{Z1}
\end{align}
and that the following Lindeberg\hyp{}type condition is satisfied: for each $\epsilon>0$, 
\begin{align}
\frac{1}{pn} \sum_{t=1}^{pn}\E\left(Z_t^2 I_{\{Z_t^2\geq \epsilon n\}}\right) \to 0,\quad \text{ as }\quad n\to\infty.
\label{Z2}
\end{align}
Condition \labelcref{Z2} is satisfied if all $\{Z_t\}$ are identically distributed, but that is not necessary. As it turns out, the limiting spectral distribution of $p^{-1}\X\X^{\T}$ depends only on $y$ and the second\hyp{}order structure of the underlying linear process $X_t$, which we now recall: its auto\hyp{}covariance function $\gamma:\Z\to\R$ is defined by $\gamma(h) = \E X_0X_h = \sum_{j=0}^\infty{c_jc_{j+|h|}}$; its spectral density $f:[0,2\pi]\to\R$ is the Fourier transform of $\gamma$, namely $f(\omega) = \sum_{h\in\Z}{\gamma(h)\ee^{-\ii h\omega}}$. The following is the main result of the paper.
\begin{theorem}
\label{thm-maintheorem}
Let $X_{t}=\sum_{j=0}^\infty{c_j Z_{t-j}}$, $t\in\Z$, be a linear stochastic process with continuously differentiable spectral density $f$, and let the matrix $\X\in\R^{p\times n}$ be given by \cref{eq-defX}. Assume that
\begin{enumerate}[i)]
  \item\label{maintheorem-item1} the sequence $(Z_t)_t$ satisfies conditions \labelcref{Z1,Z2},
  \item\label{maintheorem-item2} there exist positive constants $C,\delta$ such that $|c_j|\leq C(j+1)^{-1-\delta}$, for all $j\in\N\cup\{0\}$.
\end{enumerate}
Then, as $n$ and $p$ tend to infinity such that the ratio $p/n$ converges to a finite positive limit $y$, the empirical spectral distribution of $p^{-1}\X\X^{\T}$ converges almost surely to a non\hyp{}random probability distribution $\hat{F}$ with bounded support. The Stieltjes transform $z\mapsto s_{\hat F}(z)$ of $\hat F$ is the unique mapping $\C^+\to\C^+$ satisfying
\begin{equation}
\label{eq-stieltjes}
\frac{1}{s_{\hat F}(z)} = - z +  y \int_0^{2\pi} \frac{f(\omega)}{1+f(\omega) s_{\hat F}(z)}\dd\omega.
\end{equation}
\end{theorem}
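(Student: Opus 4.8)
The plan is to reduce $\X$ to a classical separable sample‑covariance matrix with asymptotically independent rows and a Toeplitz population covariance, for which the limiting spectral distribution is known from the work of Marchenko--Pastur and Silverstein, and then to recognise \cref{eq-stieltjes} as the associated self‑consistent equation. First I would carry out the customary preliminary reductions. Writing $\X^{(m)}$ for the matrix obtained from the truncated process $X_t^{(m)}=\sum_{j=0}^m c_jZ_{t-j}$, the difference inequality for empirical spectral distributions bounds a fixed power of the L\'evy distance between $F^{p^{-1}\X\X^{\T}}$ and $F^{p^{-1}\X^{(m)}\X^{(m)\T}}$ by a constant times $p^{-2}\tr[(\X-\X^{(m)})(\X-\X^{(m)})^{\T}]$, which is of order $\sum_{j>m}c_j^2\to0$ by \labelcref{maintheorem-item2}; the analogous estimate together with the Lindeberg condition \labelcref{Z2} lets me replace each $Z_t$ by a re‑centred, re‑scaled truncation at level $\eta_n\sqrt n$ with $\eta_n\downarrow0$. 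It therefore suffices to treat, for each fixed $m$, bounded innovations and a moving average of order $m$, and to let $m\to\infty$ at the end. Boundedness of the support holds uniformly in $m$: since $\X^{(m)}=\sum_{j=0}^m c_j\ZZ^{(j)}$, where each $\ZZ^{(j)}$ is the $p\times n$ reshape of a shift of the i.i.d.\ array, the Bai--Yin theorem gives $\|\X^{(m)}\|_{\mathrm{op}}\le\bigl(\sum_{j}|c_j|\bigr)\,O(\sqrt n)$ almost surely, hence $\|p^{-1}\X^{(m)}\X^{(m)\T}\|=O(1)$.

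For fixed $m$ I would then work with the rows of $\X$. Let $\rho_i=(X_{(i-1)n+1},\dots,X_{in})^{\T}\in\R^n$, so that $p^{-1}\X^{\T}\X=\tfrac1p\sum_{i=1}^p\rho_i\rho_i^{\T}$ and $p^{-1}\X\X^{\T}$ is its companion, the two matrices having the same non‑zero eigenvalues. Two facts make the row picture the right one. First, because the moving average has fixed order $m$, the vectors $\rho_i$ and $\rho_k$ are independent whenever $|i-k|\ge2$, and $\rho_i$ shares only $m=O(1)$ boundary innovations with each of $\rho_{i-1}$ and $\rho_{i+1}$. Second, $\E\rho_i\rho_i^{\T}=\Gamma_n^{(m)}$ for every $i$, where $\Gamma_n^{(m)}=(\gamma^{(m)}(t-t'))_{t,t'=1}^n$ is the $n\times n$ Toeplitz matrix with symbol $f^{(m)}(\omega)=\lvert\sum_{j\le m}c_j\ee^{-\ii j\omega}\rvert^2$; by Szeg\H{o}'s theorem $F^{\Gamma_n^{(m)}}$ converges weakly to the push‑forward of the uniform law on $[0,2\pi]$ under $f^{(m)}$ (continuous differentiability of $f$ entering only through quantitative control of this step). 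Apart from the near‑independence of the rows, this is exactly the Marchenko--Pastur / Bai--Silverstein model with population covariance $\Gamma_n^{(m)}$, whose companion Stieltjes transform is the unique solution in $\C^+$ of the Silverstein equation associated with this symbol.

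To run the argument I would follow the standard resolvent scheme. First, almost sure convergence of $s_n(z)=p^{-1}\tr(p^{-1}\X\X^{\T}-z)^{-1}$ to its mean: reveal the innovations one row‑block $\{Z_{(i-1)n+1},\dots,Z_{in}\}$ at a time; each step perturbs $p^{-1}\X\X^{\T}$ by a matrix of rank $O(1)$, so the martingale differences are $O(1/p)$ and Burkholder's inequality with Borel--Cantelli gives the claim. Next, the self‑consistent equation comes, as usual, from the Sherman--Morrison identity applied to $\tfrac1p\sum_i\rho_i\rho_i^{\T}$: one shows that $\beta_i=\tfrac1p\rho_i^{\T}(\tfrac1p\sum_{k\ne i}\rho_k\rho_k^{\T}-z)^{-1}\rho_i$ concentrates around $\tfrac1p\tr[(\,\cdot\,-z)^{-1}\Gamma_n^{(m)}]$, substitutes the limiting law of $\Gamma_n^{(m)}$ from Szeg\H{o}'s theorem, and passes to the limit; this produces the Silverstein equation for the companion Stieltjes transform, which is $s_{\hat F}$. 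Finally, letting $m\to\infty$ and using that $f^{(m)}\to f$ uniformly, that all the symbols are bounded, and that the solution of the equation depends continuously on the symbol (a normal‑families argument on $\C^+$ together with the uniform bound on the support), the equation becomes \cref{eq-stieltjes}; uniqueness of the $\C^+\to\C^+$ solution is standard, the solution being the Stieltjes transform of a probability measure and a contraction on $\{\Im z>\mathrm{const}\}$.

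The main obstacle is precisely the dependence across rows, equivalently across columns, which is what takes the model outside the scope of the results quoted in the introduction: the classical resolvent argument uses crucially that $\rho_i$ is independent of $\tfrac1p\sum_{k\ne i}\rho_k\rho_k^{\T}$, and here it is not. The device I expect to use is that the overlap between $\rho_i$ and each neighbour consists of only $m=O(1)$ innovations. Replacing $\rho_{i-1}$ and $\rho_{i+1}$ by rows built from fresh independent innovations makes the modified matrix exactly independent of $\rho_i$, while changing each term $\rho_k\rho_k^{\T}$ only by $uw^{\T}+wu^{\T}-ww^{\T}$ with $\|w\|=O(\sqrt m)$ and $\|u\|=O(\sqrt n)$; after division by $p\sim yn$ this is an operator‑norm perturbation of size $O(\sqrt{m/n})$, negligible for fixed $m$. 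The technical heart of the proof is to carry these $O(\sqrt{m/n})$ errors — together with the rank‑$O(1)$ errors from removing $\rho_i$ itself — through the entire resolvent recursion while keeping the dependence on $m$ explicit, so that the final limit $m\to\infty$ is justified; the remaining ingredients are careful but routine adaptations of the standard machinery.
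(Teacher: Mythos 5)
Your strategy is sound and would lead to \cref{eq-stieltjes}, but it follows a genuinely different route from the paper's main proof. The paper truncates the moving average at order $n$ (not at a fixed $m$), shows via a trace/Borel--Cantelli criterion (\cref{lemma-trace}, \cref{truncation}) that this leaves the LSD unchanged, and then---this is the step your resolvent analysis replaces---rewrites the truncated matrix \emph{algebraically} as $\ZZ\Omega$ up to corrections that are negligible in the sense of \cref{lemma-trace} (\cref{lemma-wtXrep,proposition-whX}), where $\ZZ$ is a genuinely i.\,i.\,d.\ $(p+1)\times n$ matrix and $\Omega$ is a deterministic, essentially circulant matrix. The LSD of $\ZZ\Omega\Omega^{\T}\ZZ^{\T}$ is then read off directly from Pan's theorem, and Szeg\H{o}'s theorem identifies $\hat F^{\Omega\Omega^{\T}}$ with the law of $f$ (\cref{lemma-OmegaGamma}); no resolvent recursion and no double limit in $m$ are required. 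Your route---fixed-order truncation, nearly independent rows, Silverstein's self-consistent equation with decoupling of adjacent rows, then $m\to\infty$---is essentially the alternative proof sketched in \cref{section-heuristic}. There, however, the cross-row dependence is removed far more cheaply: after truncating at order $\lfloor n^\alpha\rfloor$ one simply deletes the first $\lfloor n^\alpha\rfloor$ columns, which makes the rows \emph{exactly} independent and perturbs the ESD by at most $O(p^{-1}n^\alpha)$ in Kolmogorov distance by the rank inequality. You should adopt that device in place of resampling the shared innovations and propagating the $uw^{\T}+wu^{\T}-ww^{\T}$ perturbations through the entire recursion; it eliminates the ``technical heart'' you identify as the main obstacle.

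Two soft spots if you insist on your version. First, $\|w\|=O(\sqrt m)$ is not what the standard re-truncation of the innovations at level $\eta_n\sqrt n$ gives you; the honest bound is $O(\sqrt m\,\eta_n\sqrt n)$, which still vanishes after division by $p$ for fixed $m$ but must be stated correctly, because these errors are accumulated $p$ times inside the recursion and the dependence on $m$ has to stay explicit for the final limit. Second, the difference inequality you invoke for the truncation in $j$ bounds $L^4$ by the \emph{product} of $(pn)^{-1}\tr(\X-\X^{(m)})(\X-\X^{(m)})^{\T}$ and $(pn)^{-1}\tr[\X\X^{\T}+\X^{(m)}(\X^{(m)})^{\T}]$; to conclude almost surely you need bounds on the variances of both traces together with Borel--Cantelli, which is exactly the content of \cref{lemma-trace} and the fourth-moment computations in \cref{truncation}. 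That work is where most of the labour in the paper lies, and it is not avoidable in your approach either.
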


\begin{remark}
The assumption that the coefficients $(c_j)_j$ decay at least polynomially is not very restrictive; it allows, e.\,g., for $X_t$ to be an ARMA or fractionally integrated ARMA process, which exhibits long\hyp{}range dependence \citep{granger1980,Hosking1981}. In the latter case the entries of the matrix $\X$ are long\hyp{}range dependent as well.
\end{remark}
\begin{remark}
It is possible to generalize the proof of \cref{thm-maintheorem} so that the result also holds for non\hyp{}causal processes, where $X_t=\sum_{j=-\infty}^\infty{c_j Z_{t-j}}$. The required changes are merely notational, the only difference in the result is that the auto\hyp{}covariance function is then given by $\sum_{j=-\infty}^\infty c_j c_{j+|h|}$.
\end{remark}
The distribution $\hat{F}$ can be obtained from $s_{\hat F}$ via the Perron--Frobenius inversion formula \citep[Theorem B.8]{bai2010}, which states that for all continuity point $0<a<b$ of $\hat F$, it holds that $\hat{F}([a,b])=\lim_{\epsilon\to 0^+} \int_a^b \Im s_{\hat{F}}(x+\epsilon \ii) \dd x$. In general, the analytic determination of this distribution is not feasible. It is, however, easy to check that for the special case of independent entries one recovers the classical Marchenko--Pastur law.

\section{Proof of Theorem \ref{thm-maintheorem}}
\label{section-proofs}
The strategy in the proof of \cref{thm-maintheorem} is to show that the limiting spectral distribution of $p^{-1}\X\X^{\T}$ is stable under modifications of $\X$ which reduce the sample covariance matrix to the form $p^{-1}\ZZ H\ZZ^{\T}$, for a matrix $\ZZ$ with i.\,i.\,d.\ entries, and some positive definite $H$. To this end we will repeatedly use the following lemma which presents sufficient conditions for the limiting spectral distributions of two sequences of matrices to be equal.
\begin{lemma}[Trace criterion]
\label{lemma-trace}
Let $A_{1,n}$, $A_{2,n}$ be sequences of $p\times n$ matrices, where $p=p_n$ depends on $n$ such that $p_n\to\infty$ as $n\to\infty$. Assume that the spectral distribution $F^{p^{-1}A_{1,n}A_{1,n}^{\T}}$ converges almost surely to a deterministic limit $\hat F^{p^{-1}A_{1,n}A_{1,n}^{\T}}$ as $n$ tends to infinity. If there exists a positive number $\epsilon$ such that
\begin{enumerate}[i)]
 \item\label{lemma-trace-item1} $p^{-4}\E\left[\tr\left(A_{1,n}-A_{2,n}\right)\left(A_{1,n}-A_{2,n}\right)^{\T}\right]^2=O(n^{-1-\epsilon})$,
 \item\label{lemma-trace-item2} $p^{-2}\E\tr A_{i,n}A_{i,n}^{\T}=O(1)$, $i=1,2$, and
 \item\label{lemma-trace-item3} $p^{-4}\var\tr A_{i,n}A_{i,n}^{\T}=O(n^{-1-\epsilon})$, $i=1,2$,
\end{enumerate}
then the spectral distribution of $p^{-1}A_{2,n}A_{2,n}^{\T}$ is convergent almost surely with the same limit $\hat F^{p^{-1}A_{1,n}A_{1,n}^{\T}}$.
\end{lemma}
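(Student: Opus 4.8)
The plan is to bound the Lévy distance $L(\cdot,\cdot)$ between the empirical spectral distributions of $p^{-1}A_{1,n}A_{1,n}^{\T}$ and $p^{-1}A_{2,n}A_{2,n}^{\T}$ by the trace functionals appearing in the hypotheses, and then to upgrade the resulting estimate to almost sure convergence by means of the Borel--Cantelli lemma. The analytic input is the classical perturbation inequality for sample covariance matrices: for any two $p\times n$ matrices $C$ and $D$,
\begin{equation*}
L^4\bigl(F^{CC^{\T}},F^{DD^{\T}}\bigr)\le\frac{2}{p^2}\,\tr\bigl(CC^{\T}+DD^{\T}\bigr)\,\tr\bigl((C-D)(C-D)^{\T}\bigr),
\end{equation*}
see, e.g., \citet[Appendix~A]{bai2010}; it follows from the Hoffman--Wielandt inequality for singular values, a Cauchy--Schwarz step, and an elementary comparison of the Lévy distance with the $\ell^2$-distance between the ordered eigenvalues. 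Applying this with $C=p^{-1/2}A_{1,n}$ and $D=p^{-1/2}A_{2,n}$ gives
\begin{equation*}
L^4\bigl(F^{p^{-1}A_{1,n}A_{1,n}^{\T}},F^{p^{-1}A_{2,n}A_{2,n}^{\T}}\bigr)\le 2\Bigl(\tfrac{\tr A_{1,n}A_{1,n}^{\T}}{p^2}+\tfrac{\tr A_{2,n}A_{2,n}^{\T}}{p^2}\Bigr)\,\frac{\tr\bigl((A_{1,n}-A_{2,n})(A_{1,n}-A_{2,n})^{\T}\bigr)}{p^2},
\end{equation*}
so it is enough to prove that the parenthesised factor stays bounded for all large $n$ almost surely, and that the remaining factor tends to zero almost surely.

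For the bounded factor I would combine \labelcref{lemma-trace-item2,lemma-trace-item3}: by \labelcref{lemma-trace-item2} the means $p^{-2}\E\tr A_{i,n}A_{i,n}^{\T}$ are dominated by a constant $M$, and by \labelcref{lemma-trace-item3} together with Chebyshev's inequality $\Pr\bigl(\bigl|p^{-2}\tr A_{i,n}A_{i,n}^{\T}-p^{-2}\E\tr A_{i,n}A_{i,n}^{\T}\bigr|>1\bigr)=O(n^{-1-\epsilon})$, which is summable; hence Borel--Cantelli yields $p^{-2}\tr A_{i,n}A_{i,n}^{\T}\le M+1$ for all large $n$, almost surely, for $i=1,2$. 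For the vanishing factor I would use \labelcref{lemma-trace-item1}, namely $\E\bigl[p^{-2}\tr\bigl((A_{1,n}-A_{2,n})(A_{1,n}-A_{2,n})^{\T}\bigr)\bigr]^2=O(n^{-1-\epsilon})$: Markov's inequality applied to this second moment shows that, for each fixed $\eta>0$, the probability that $p^{-2}\tr\bigl((A_{1,n}-A_{2,n})(A_{1,n}-A_{2,n})^{\T}\bigr)$ exceeds $\eta$ is $O(n^{-1-\epsilon})$, again summable, so Borel--Cantelli along a sequence $\eta\downarrow 0$ forces $p^{-2}\tr\bigl((A_{1,n}-A_{2,n})(A_{1,n}-A_{2,n})^{\T}\bigr)\to 0$ almost surely. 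Plugging these two facts into the second display gives $L\bigl(F^{p^{-1}A_{1,n}A_{1,n}^{\T}},F^{p^{-1}A_{2,n}A_{2,n}^{\T}}\bigr)\to 0$ almost surely; since $L$ metrizes weak convergence and, by hypothesis, $F^{p^{-1}A_{1,n}A_{1,n}^{\T}}\to\hat F^{p^{-1}A_{1,n}A_{1,n}^{\T}}$ almost surely, the triangle inequality for $L$ then gives $F^{p^{-1}A_{2,n}A_{2,n}^{\T}}\to\hat F^{p^{-1}A_{1,n}A_{1,n}^{\T}}$ almost surely, as claimed.

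Once the perturbation inequality is available the argument is essentially mechanical, so I do not expect a deep obstacle; the one point that genuinely requires care — and that pins down the exact form of the hypotheses — is that both invocations of Borel--Cantelli need honestly summable tail probabilities. This is precisely why \labelcref{lemma-trace-item1,lemma-trace-item3} are stated with a polynomial rate $O(n^{-1-\epsilon})$, $\epsilon>0$, rather than merely $o(1)$, and why \labelcref{lemma-trace-item1} bounds the \emph{second} moment of $\tr\bigl((A_{1,n}-A_{2,n})(A_{1,n}-A_{2,n})^{\T}\bigr)$ and not only its first moment, which by Jensen would be of order $n^{-(1+\epsilon)/2}$ and hence summable only for $\epsilon>1$. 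It is also worth noting that the homogeneity of the $L^4$-inequality, in which both trace factors are normalised by $p^2$, is what lets the scheme close without any a priori assumption on the growth rate of $p$; a cruder estimate based on, say, the $2$-Wasserstein distance would carry a spurious extra factor of $p$.
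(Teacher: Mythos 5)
Your argument is correct and is exactly the paper's proof written out in full: the authors' one-line proof cites precisely the three ingredients you use, namely the L\'evy-distance perturbation bound \citet[Corollary A.42]{bai2010}, Chebyshev's (Markov's) inequality applied to the normalised traces, and the first Borel--Cantelli lemma to upgrade the summable tail bounds to almost sure statements. The details you supply -- bounding the factor $p^{-2}\tr(A_{1,n}A_{1,n}^{\T}+A_{2,n}A_{2,n}^{\T})$ via \labelcref{lemma-trace-item2,lemma-trace-item3} and sending the factor $p^{-2}\tr(A_{1,n}-A_{2,n})(A_{1,n}-A_{2,n})^{\T}$ to zero via \labelcref{lemma-trace-item1} -- all check out.
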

\begin{proof}
The claim is a direct consequence of Chebyshev's inequality, the first Borel--Cantelli lemma, and \citet[Corollary A.42]{bai2010}
\end{proof}
With the constants $C$ and $\delta$ from assumption \labelcref{maintheorem-item2} of \cref{thm-maintheorem} we define $\oc_j\coloneqq C(j+1)^{-1-\delta}$, such that $|c_j|\leq \oc_j$ for all $j$. Without further reference we will repeatedly use the fact that $j\mapsto \oc_j$ is monotone, that $\sum_{j=1}^\infty\oc_j^\alpha$ is finite for every $\alpha\geq 1$, and that $\sum_{j=n}^\infty{\oc_j^\alpha}$ is of order $O(n^{1-\alpha(1+\delta)})$. Since it is difficult to deal with infinite\hyp{}order moving averages processes directly, it is convenient to truncate the entries of the matrix $\X$ by defining $\wt X_t=\sum_{j=0}^n{c_jZ_{t-j}}$ and $\wt\X=(\wt X_{(i-1)n+t})_{it}$; this is different from the usual truncation of the support of the entries of a random matrix.
\begin{proposition}[Truncation]
\label{truncation}
If the empirical spectral distribution of $p^{-1}\wt\X\wt\X^{\T}$ converges to a limit, then the empirical spectral distribution of $p^{-1}\X\X^{\T}$ converges to the same limit.
\end{proposition}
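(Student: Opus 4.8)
The plan is to invoke the trace criterion, \cref{lemma-trace}, with $A_{1,n}=\wt\X$ and $A_{2,n}=\X$. The hypothesis furnishes the almost sure convergence of $F^{p^{-1}\wt\X\wt\X^{\T}}$ to a deterministic limit demanded by \cref{lemma-trace}, so it remains only to verify the three moment conditions \labelcref{lemma-trace-item1,lemma-trace-item2,lemma-trace-item3}. Throughout one uses that $p$ and $n$ are of the same order, since $p/n\to y\in(0,\infty)$.

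For \labelcref{lemma-trace-item1}, the key observation is that the $(i,t)$ entry of $\X-\wt\X$ equals $D_{i,t}\coloneqq\sum_{j=n+1}^{\infty}c_j Z_{(i-1)n+t-j}$, a centred linear combination of independent innovations with $\E D_{i,t}^2=\sum_{j>n}c_j^2\le\sum_{j>n}\oc_j^2=O(n^{-1-2\delta})$. Writing $\E[\tr(\X-\wt\X)(\X-\wt\X)^{\T}]^2$ as $(\E\sum_{i,t}D_{i,t}^2)^2+\var(\sum_{i,t}D_{i,t}^2)$ and noting $\E\sum_{i,t}D_{i,t}^2=pn\sum_{j>n}c_j^2=O(pn^{-2\delta})$, the squared-mean part is $O(p^2n^{-4\delta})$; for the variance, a routine expansion using independence of the $Z_t$ and $\sigma_4<\infty$ yields $\operatorname{Cov}(D_{i,t}^2,D_{i',t'}^2)=O\big((\sum_{j>n}\oc_j^2)^2\big)=O(n^{-2-4\delta})$ uniformly in the indices, so summing over the $(pn)^2$ index pairs again gives $O(p^2n^{-4\delta})$. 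Dividing by $p^4$ and using $p\asymp n$ yields $p^{-4}\E[\tr(\X-\wt\X)(\X-\wt\X)^{\T}]^2=O(n^{-2-4\delta})$, which is $O(n^{-1-\epsilon})$ for $\epsilon=1$.

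Conditions \labelcref{lemma-trace-item2,lemma-trace-item3} are second-order computations. Since $\E\X_{i,t}^2=\gamma(0)$ and $\E\wt\X_{i,t}^2=\sum_{j=0}^n c_j^2\le\gamma(0)$, one has $p^{-2}\E\tr\X\X^{\T}=(n/p)\gamma(0)=O(1)$ and likewise for $\wt\X$, which is \labelcref{lemma-trace-item2}. For \labelcref{lemma-trace-item3}, since $(i,t)\mapsto(i-1)n+t$ maps $\{1,\dots,p\}\times\{1,\dots,n\}$ bijectively onto $\{1,\dots,pn\}$, one has $\tr\X\X^{\T}=\sum_{m=1}^{pn}X_m^2$ and $\var\tr\X\X^{\T}=\sum_{m,m'\le pn}\operatorname{Cov}(X_m^2,X_{m'}^2)$; a standard computation (e.g.\ $\operatorname{Cov}(X_m^2,X_{m'}^2)=2\gamma(m-m')^2+\sum_j c_j^2c_{j+|m-m'|}^2(\E Z_{m-j}^4-3)$) together with \labelcref{maintheorem-item2} and $\sigma_4<\infty$ bounds $|\operatorname{Cov}(X_m^2,X_{m'}^2)|$ by a summable function of the lag $m-m'$, whence $\var\tr\X\X^{\T}=O(pn)$ and $p^{-4}\var\tr\X\X^{\T}=O(n/p^3)=O(n^{-2})$. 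The estimate for $\wt\X$ is the same, with the covariances now vanishing once $|m-m'|>n$. All three conditions then hold with $\epsilon=1$, and \cref{lemma-trace} gives the proposition.

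I expect the only real difficulty to be condition \labelcref{lemma-trace-item1}: one must show that the discarded tail of the moving-average representation, assembled in $\X-\wt\X$, is negligible in the trace norm. This is precisely where assumption \labelcref{maintheorem-item2} is used decisively — it forces $\sum_{j>n}\oc_j^2=O(n^{-1-2\delta})$ to decay fast enough to beat the $O(pn)=O(n^2)$ growth in the number of matrix entries — and also where $\sigma_4<\infty$ is needed, to control $\operatorname{Cov}(D_{i,t}^2,D_{i',t'}^2)$ even though the innovations $Z_t$ feeding distinct entries $D_{i,t}$ overlap substantially. Conditions \labelcref{lemma-trace-item2,lemma-trace-item3} are routine.
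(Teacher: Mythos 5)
Your proposal is correct and follows essentially the same route as the paper: it invokes the trace criterion (\cref{lemma-trace}) with $A_{1,n}=\wt\X$, $A_{2,n}=\X$ and verifies the three conditions by fourth-moment computations based on the independence of the $Z_t$, the bound $\sigma_4<\infty$, and the polynomial decay $|c_j|\le\oc_j$. The only difference is bookkeeping — you organize condition \labelcref{lemma-trace-item1} as $(\E T)^2+\var T$ with a uniform Cauchy--Schwarz bound on $\operatorname{Cov}(D_{i,t}^2,D_{i',t'}^2)$, and condition \labelcref{lemma-trace-item3} via the stationary covariance formula for $X_m^2$, whereas the paper enumerates the pairing configurations explicitly; both yield admissible orders (your $O(n^{-2-4\delta})$ versus the paper's $O(n^{-2-2\delta})$ for \labelcref{lemma-trace-item1}, and $O(n^{-2})$ for \labelcref{lemma-trace-item3}).
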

\begin{proof}
The proof proceeds in two steps in which we verify conditions \labelcref{lemma-trace-item1,lemma-trace-item2,lemma-trace-item3} of \cref{lemma-trace}.
\paragraph{\bf Step 1}
The definitions of $\X$ and $\wt\X$ imply that
\begin{align*}
\Delta_{\X,\wt\X}\coloneqq\frac{1}{p^2}\tr\left(\X-\wt\X\right)\left(\X-\wt\X\right)^{\T} =& \frac{1}{p^2}\sum_{i=1}^p\sum_{t=1}^n\left[\X_{it}-\wt\X_{it}\right]^2 = \frac{1}{p^2}\sum_{i,t=1}^{p,n}\sum_{k,k'=n+1}^\infty{Z_{(i-1)n+t-k}Z_{(i-1)n+t-k'}c_kc_{k'}}.
\end{align*}
We shall show that the second moment of $\Delta_{\X,\wt\X}$ is of order at most $n^{-2-2\delta}$. Since
\begin{equation}
\label{eq-fubinitrunc}
\sum_{\substack{k,k'\\m,m'}=n+1}^\infty{\E\left|Z_{(i-1)n+t-k}Z_{(i-1)n+t-k'}Z_{(i'-1)n+t'-m}Z_{(i'-1)n+t'-m'}\right||c_k||c_{k'}||c_m||c_{m'}|}\leq\sigma_4\left[\sum_{k=0}^\infty|c_k|\right]^4<\infty,
\end{equation}
we can apply Fubini's theorem to interchange expectation and summation in the computation of
\begin{equation}
\label{eq-EDeltaX2}
\mu_{2,\Delta_{\X,\wt\X}}\coloneqq\E\Delta_{\X,\wt\X}^2 = \frac{1}{p^4}\sum_{\substack{i,i'\\t,t'}=1}^{p,n}\sum_{\substack{k,k'\\m,m'}=n+1}^\infty{\E\left[Z_{(i-1)n+t-k}Z_{(i-1)n+t-k'}Z_{(i'-1)n+t'-m}Z_{(i'-1)n+t'-m'}\right]c_kc_{k'}c_mc_{m'}}.
\end{equation}
Since the $\{Z_t\}$ are independent, the expectation in that sum is non\hyp{}zero only if all four $Z$ are the same or else one can match the indices in two pairs. In the latter case we distinguish three cases according to which factor the first $Z$ is paired with. This leads to the additive decomposition
\begin{equation}
\label{eq-mu2DeltaXXtilde}
\mu_{2,\Delta_{\X,\wt\X}} = \mu_{2,\Delta_{\X,\wt\X}}^{\matchD}+\mu_{2,\Delta_{\X,\wt\X}}^{\matchA}+\mu_{2,\Delta_{\X,\wt\X}}^{\matchB}+\mu_{2,\Delta_{\X,\wt\X}}^{\matchC},
\end{equation}
where the ideograms indicate which of the four factors are equal. For the contribution from all four $Z$ being equal it holds that $k=k'$, $m=m'$, and $(i-1)n+t-k=(i'-1)n+t'-m$, so that
\begin{equation*}
\mu_{2,\Delta_{\X,\wt\X}}^{\matchD} = \frac{\sigma_4}{p^4}\sum_{i,i'}^p\sum_{t,t'=1}^n\sum_{m=\max\{n+1,n+1-(i-i')n-(t-t')\}}^\infty{c_{(i-i')n+(t-t')+m}^2c_m^2}.
\end{equation*}
If we introduce the new summation variables $\delta_i\coloneqq i-i'$ and $\delta_t\coloneqq t-t'$, we obtain
\begin{align*}
\mu_{2,\Delta_{\X,\wt\X}}^{\matchD} =& \frac{\sigma_4}{p^4}\sum_{\delta_i=1-p}^{p-1}\underbrace{(p-|\delta_i|)}_{\leq p}\sum_{\delta_t=1-n}^{n-1}\underbrace{(n-|\delta_t|)}_{\leq n}\sum_{m=\max\{n+1,n+1-\delta_in-\delta_t\}}^\infty{c_{m+\delta_i n+\delta_t}^2c_m^2}.
\end{align*}
If $\delta_i$ is positive, then $\delta_in+\delta_t$ is positive as well; the fact that $|c_j|$ is bounded by $\oc_j$ and the monotonicity of $j\mapsto \oc_j$ imply that $c_{m+\delta_i n+\delta_t}^2\leq \oc_{(\delta_i-1)n}\oc_{\delta_t+n}$ so that the contribution from $\delta_i\geq 1$ can be estimated as
\begin{equation*}
\mu_{2,\Delta_{\X,\wt\X}}^{\matchD,+} \leq  \underbrace{\frac{\sigma_4 n}{p^3}}_{=O(n^{-2})}\underbrace{\sum_{\delta_i=1}^{p-1}\oc_{(\delta_i-1)n}}_{=O(n^{-1-\delta})}\underbrace{\sum_{\delta_t=1}^{2n-1}\oc_{\delta_t}}_{=O(1)}\underbrace{\sum_{m=n+1}^\infty \oc_m^2}_{=O(n^{-1-2\delta})}=O(n^{-4-3\delta}).
\end{equation*}
An analogous argument shows that the contribution from $\delta_i\leq -1$, denoted by $\mu_{2,\Delta_{\X,\wt\X}}^{\matchD,-}$, is of the same order of magnitude. The contribution to $\mu_{2,\Delta_{\X,\wt\X}}^{\matchD}$ from $\delta_i=0$ is given by
\begin{align*}
\mu_{2,\Delta_{\X,\wt\X}}^{\matchD,\varnothing} =& \frac{\sigma_4n}{p^3}\sum_{\delta_t=1-n}^{n-1}\sum_{m=\max\{n+1,n+1-\delta_t\}}^\infty{c_m^2 c_{m+\delta_t}^2}\leq \underbrace{\frac{\sigma_4n}{p^3}}_{=O(n^{-2})}\left[2\underbrace{\sum_{\delta_t=1}^{n-1}\oc_{\delta}^2}_{=O(1)}\underbrace{\sum_{m=n+1}^\infty \oc_m^2}_{=O(n^{-1-2\delta})}+\underbrace{\sum_{m=n+1}^\infty \oc_m^4}_{=O(n^{-3-4\delta})}\right]=O(n^{-3-2\delta}).
\end{align*}
By combining the last two displays, it follows that $\mu_{2,\Delta_{\X,\wt\X}}^{\matchD}$ is of order $O(n^{-3-2\delta})$. The second term in \cref{eq-mu2DeltaXXtilde} corresponds to $k=k'$, $m=m'$, and $(i-1)n+t-k\neq(i'-1)n+t'-m$. The restriction that not all four factors be equal is taken into account by subtracting $\mu_{2,\Delta_{\X,\wt\X}}^{\matchD}$; consequently,
\begin{equation*}
\mu_{2,\Delta_{\X,\wt\X}}^{\matchA} = \underbrace{\frac{1}{p^4}\sum_{i,i'=1}^p\sum_{t,t'=1}^n}_{=O(1)}\underbrace{\sum_{k,m=n+1}^\infty c_k^2 c_m^2}_{=O(n^{-2-4\delta})} - \mu_{2,\Delta_{\X,\wt\X}}^{\matchD}=O(n^{-2-4\delta}).
\end{equation*}
It remains to analyse $\mu_{2,\Delta_{\X,\wt\X}}^{\matchB}$ which, by symmetry, is equal to $\mu_{2,\Delta_{\X,\wt\X}}^{\matchC}$. If the first factor is paired with the third, the condition for non\hyp{}vanishment becomes $k = m +(i-i')n + t-t'$, $k' = m' +(i-i')n + t-t'$, and $m\neq m'$. Again introducing the new summation variables $\delta_i\coloneqq i-i'$ and $\delta_t\coloneqq t-t'$, we obtain that
\begin{align*}
\mu_{2,\Delta_{\X,\wt\X}}^{\matchB} = \frac{1}{p^4}\sum_{\delta_i=1-p}^{p-1}\underbrace{(p-|\delta_i|)}_{\leq p}\sum_{\delta_t=1-n}^{n-1}\underbrace{(n-|\delta_t|)}_{\leq n}\sum_{m,m'=\max\{n+1,n+1-\delta_in-\delta_t\}}^\infty{c_m c_{m'}c_{m+\delta_i n+\delta_t}c_{m'+\delta_i n+\delta_t}}-\mu_{2,\Delta_{\X,\wt\X}}^{\matchD}.
\end{align*}
As in the analysis of $\mu_{2,\Delta_{\X,\wt\X}}^{\matchD}$ we obtain the contribution from $\delta_i\neq 0$ as
\begin{equation}
\label{eq-mu2DeltaXXtilematchBplusminus}
\left|\mu_{2,\Delta_{\X,\wt\X}}^{\matchB,+}\right|=\left|\mu_{2,\Delta_{\X,\wt\X}}^{\matchB,-}\right| \leq \underbrace{\frac{n}{p^3}}_{=O(n^{-2})} \underbrace{\sum_{\delta_i=1}^{p-1}\oc_{(\delta_i-1)n}}_{=O(n^{-1-\delta})}\underbrace{\sum_{\delta_t=1}^{2n-1}\oc_{\delta_t}}_{=O(1)}\underbrace{\sum_{m,m'=n+1}^\infty \oc_m \oc_{m'}}_{=O(n^{-2\delta})}+\mu_{2,\Delta_{\X,\wt\X}}^{\matchD}=O(n^{-3-2\delta}).
\end{equation}
Finally, for the contribution from $\delta_i=0$ one finds that
\begin{align}
\label{eq-mu2DeltaXXtilematchBnull}
\left|\mu_{2,\Delta_{\X,\wt\X}}^{\matchB,\varnothing}\right| \leq& \frac{n}{p^3}\sum_{\delta_t=1-n}^{n-1}\sum_{m,m'=\max\{n+1,n+1-\delta_t\}}^\infty{|c_mc_{m'}c_{m+\delta_t}c_{m'+\delta_t}|}+\mu_{2,\Delta_{\X,\wt\X}}^{\matchD}\notag\\
\leq & \underbrace{\frac{n}{p^3}}_{=O(n^{-2})}\left[ 2\underbrace{\sum_{\delta_t=1}^{n-1}\oc_{\delta_t}^2}_{=O(1)}\underbrace{\sum_{m,n'=n+1}^\infty{\oc_m\oc_{m'}}}_{=O(n^{-2\delta})} + \underbrace{\sum_{m,n'=n+1}^\infty{\oc_m^2\oc_{m'}^2}}_{=O(n^{-2-4\delta})}\right]+\mu_{2,\Delta}^{\matchD}=O(n^{-2-2\delta}).
\end{align}
The last two displays \labelcref{eq-mu2DeltaXXtilematchBplusminus,eq-mu2DeltaXXtilematchBnull} imply that $\mu_{2,\Delta_{\X,\wt\X}}^{\matchB} = \mu_{2,\Delta_{\X,\wt\X}}^{\matchB,-}+\mu_{2,\Delta_{\X,\wt\X}}^{\matchB,\varnothing}+\mu_{2,\Delta_{\X,\wt\X}}^{\matchB,+}=O(n^{-2-2\delta})$. Thus, $\mu_{2,\Delta_{\X,\wt\X}}$ is of order $O(n^{-2-2\delta})$, as claimed. 
\paragraph{\bf Step 2}
Next we verify assumptions \labelcref{lemma-trace-item2,lemma-trace-item3} of \cref{lemma-trace}, which means that we show that both $\Sigma_{\X} \coloneqq p^{-2}\tr\X\X^{\T}$ and $\Sigma_{\wt\X} \coloneqq p^{-2}\tr\wt\X\wt\X^{\T}$ have bounded first moments and variances of order $n^{-1-\epsilon}$, for some $\epsilon>0$; in fact, $\epsilon$ will turn out to be one. For $\Sigma_{\X}$ we obtain
\begin{align*}
\mu_{1,\Sigma_{\X}}\coloneqq\E\Sigma_{\X} 
 = \frac{1}{p^2}\sum_{i=1}^p\sum_{t=1}^n\sum_{k,k'=0}^\infty{\E\left[Z_{(i-1)n+t-k}Z_{(i-1)n+t-k'}\right]c_kc_{k'}}= \frac{n}{p}\sum_{k=0}^\infty{c_k^2},
\end{align*}
where the change of the order of expectation and summation is valid by Fubini's theorem. Using \cref{eq-fubinitrunc} and Fubini's theorem, the second moment of $\Sigma_{\X}$ becomes
\begin{align*}
\mu_{2,\Sigma_{\X}}\coloneqq\E\Sigma_{\X}^2=\frac{1}{p^4}\sum_{i,i'=1}^p\sum_{t,t'=1}^n\sum_{\substack{k,k'\\m,m'}=0}^\infty{\E\left[Z_{(i-1)n+t-k}Z_{(i-1)n+t-k'}Z_{(i'-1)n+t'-m}Z_{(i'-1)n+t'-m'}\right]c_kc_{k'}c_mc_{m'}}.
\end{align*}
This sum coincides with the expression analysed in \cref{eq-EDeltaX2}, except that here the $k,k',m,m'$ sums start at zero, and not at $n+1$. A straightforward adaptation of the arguments there show that $\mu_{2,\Sigma_{\X}}$ equals $n^2 p^{-2}\left(\sum_{k=0}^\infty{c_k^2}\right)^2+ O(n^{-2})$, and, consequently, that $\var\Sigma_{\X}=\mu_{2,\Sigma_{\X}}-\left(\mu_{1,\Sigma_{\X}}\right)^2=O(n^{-2})$. Analogous computations show that $\E\Sigma_{\wt\X}$ is bounded, and that $\var\Sigma_{\wt\X}=O(n^{-2})$. Thus, conditions \labelcref{lemma-trace-item2,lemma-trace-item3} of \cref{lemma-trace} are verified, and the proof of the proposition is complete.
\end{proof}

Because of \cref{truncation} the problem of determining the limiting spectral distribution of the sample covariance matrix $p^{-1}\X\X^T$ has been reduced to computing the limiting spectral distribution of $p^{-1}\wt\X\wt\X^{\T}$, where now, for fixed $n$, the matrix $\wt\X$ depends on only finitely many of the noise variables $Z_t$. The fact that the entries of $\wt\X$ are finite\hyp{}order moving average processes and therefore linearly dependent on the $Z_t$ allows for $\wt\X$ to be written as a linear transformation of the i.\,i.\,d.\ matrix $\ZZ\coloneqq(Z_{(i-2)n+t})_{i=1,\ldots,p+1,t=1,\ldots,n}$. We emphasize that $\ZZ$, in contrast to $\X$ and $\wt\X$, is a $(p+1)\times n$ matrix; this is necessary because the entries in the first row of $\wt\X$ depend on noise variables with negative indices, up to and including $Z_{1-n}$. In order to formulate the transformation that maps $\ZZ$ to $\wt\X$ concisely in the next lemma, we define the matrices $K_n=\left(\begin{array}{cc}0 & 0 \\ \1_{n-1} & 0\end{array}\right)\in \R^{n\times n}$, as well as the polynomials $\chi_n(z) = c_0+c_1z + \ldots + c_{n}z^{n}$ and $\bar\chi_n(z) = z^n\chi\left(1/z\right) = c_n + c_{n-1}z + \ldots + c_0 z^n$.
\begin{lemma}
\label{lemma-wtXrep}
With $\wt\X$, $\ZZ$, $K_n$ and $\chi_n$, $\bar\chi_n$ defined as before it holds that
\begin{equation}
\label{eq-Xtilderep}
\wt\X = \left[\begin{array}{cccc}0 & \1_p & \1_p & 0\end{array}\right]\left(\begin{array}{cc}\ZZ & 0 \\ 0 & \ZZ\end{array}\right)\left[\begin{array}{c}\chi_n\left(K_n^{\T}\right)\\\bar\chi_n\left(K_n\right)\end{array}\right].
\end{equation}
\end{lemma}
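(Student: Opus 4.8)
The plan is to verify the matrix identity \cref{eq-Xtilderep} entrywise: I would compute the $(i,t)$ entry of its right-hand side and check that it equals $\wt\X_{i,t}=\wt X_{(i-1)n+t}=\sum_{j=0}^n c_j Z_{(i-1)n+t-j}$.

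\emph{First step: the triangular Toeplitz factors.} Since $K_n^{\T}$ is nilpotent with ones on the first super-diagonal, $(K_n^{\T})^j$ has ones on the $j$-th super-diagonal for $0\le j\le n-1$ and vanishes for $j\ge n$; hence $\chi_n(K_n^{\T})=\sum_{j=0}^n c_j (K_n^{\T})^j$ is the upper-triangular matrix whose $(a,b)$ entry is $c_{b-a}$ when $a\le b$ and $0$ otherwise. Dually, $K_n^k$ has ones on the $k$-th sub-diagonal and $K_n^n=0$, so $\bar\chi_n(K_n)=\sum_{k=0}^n c_{n-k}K_n^k$ is the lower-triangular matrix whose $(a,b)$ entry is $c_{n-a+b}$ when $a\ge b$ and $0$ otherwise.

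\emph{Second step: the block multiplication.} Multiplying the block-diagonal matrix with $\ZZ$ in each diagonal block by the rightmost factor stacking $\chi_n(K_n^{\T})$ over $\bar\chi_n(K_n)$ produces the $(2p+2)\times n$ matrix obtained by stacking $\ZZ\chi_n(K_n^{\T})$ over $\ZZ\bar\chi_n(K_n)$, and left multiplication by $\left[\begin{array}{cccc}0 & \1_p & \1_p & 0\end{array}\right]$ then returns, in its $i$-th row, the sum of row $i+1$ of $\ZZ\chi_n(K_n^{\T})$ and row $i$ of $\ZZ\bar\chi_n(K_n)$. Using $\ZZ_{a,c}=Z_{(a-2)n+c}$ together with the Toeplitz forms just obtained, one computes
\[ [\ZZ\chi_n(K_n^{\T})]_{i+1,t}=\sum_{c=1}^{t} Z_{(i-1)n+c}\, c_{t-c}=\sum_{j=0}^{t-1}c_j Z_{(i-1)n+t-j} \]
after the substitution $j=t-c$, and likewise
\[ [\ZZ\bar\chi_n(K_n)]_{i,t}=\sum_{c=t}^{n} Z_{(i-2)n+c}\, c_{n-c+t}=\sum_{j=t}^{n}c_j Z_{(i-1)n+t-j} \]
after $j=n-c+t$. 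Adding the two contributions yields $\sum_{j=0}^n c_j Z_{(i-1)n+t-j}=\wt\X_{i,t}$, which is the assertion.

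Nothing here is conceptually deep; the one point that requires care is the index bookkeeping, and in particular why the split of $\{0,1,\dots,n\}$ at $j=t$ matches precisely the split between the upper-triangular factor $\chi_n(K_n^{\T})$ (contributing the terms with $j<t$) and the lower-triangular factor $\bar\chi_n(K_n)$ (contributing $j\ge t$). This is dictated by the layout of $\ZZ$: the terms of $\wt\X_{i,t}$ with $j<t$ involve the innovations $Z_{(i-1)n+1},\dots,Z_{in}$, which are exactly the entries of row $i+1$ of $\ZZ$, whereas those with $j\ge t$ involve $Z_{(i-2)n+1},\dots,Z_{(i-1)n}$, the entries of row $i$ of $\ZZ$. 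The two extra border rows of $\ZZ$, together with the two zero columns in $\left[\begin{array}{cccc}0 & \1_p & \1_p & 0\end{array}\right]$, are precisely what supplies the innovations of negative index entering the first row of $\wt\X$ and absorbs the rows of the product that have no counterpart in $\wt\X$. I do not anticipate any obstacle beyond this indexing.
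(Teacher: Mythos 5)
Your proof is correct: the entrywise verification goes through, the Toeplitz descriptions of $\chi_n(K_n^{\T})$ and $\bar\chi_n(K_n)$ are right (including the harmless fact that the $c_n(K_n^{\T})^n$ and $c_0K_n^n$ terms vanish, so $c_0$ is supplied only by the upper-triangular factor and $c_n$ only by the lower-triangular one), and the index split at $j=t$ matches the block structure exactly as you describe. Your route differs from the paper's: the paper first expresses $\wt\X$ as $\left[\begin{array}{cc}0 & \1_p\end{array}\right]\chi_n(S)\ZZ$ for a cyclic right-shift operator $S$ acting on $(p+1)\times n$ matrices, then decomposes the action of $S$ as $S\ZZ=K_{p+1}\ZZ E+\ZZ K_n^{\T}$ and obtains the block formula for $S^k\ZZ$ by induction, so the factorization \cref{eq-Xtilderep} is \emph{derived} rather than merely checked. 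That argument explains where the two triangular factors and the doubled copy of $\ZZ$ come from (they are the ``wrap-around'' and ``non-wrap-around'' parts of the shift), at the cost of setting up the $\operatorname{vec}$ and shift-operator machinery; your verification is more elementary and self-contained, and your closing remark about which rows of $\ZZ$ feed which half of the sum recovers the same intuition informally. Either proof is acceptable.
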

\begin{proof}
Let $s_N:\R^N\to\R^N$ be the right shift operator defined by $s_N(v_1,\ldots,v_N) = (0, v_1,\ldots,v_{N-1})$ and for positive integers $r,s$ denote by $\operatorname{vec}_{r,s}:\R^{r\times s}\to\R^{rs}$ the bijective linear operator that transforms a matrix into a vector by horizontally concatenating its subsequent rows, starting with the first one. The operator $S_{r,s}:\R^{r\times s}\to \R^{r\times s}$ is then defined as $S_{r,s}=\operatorname{vec}_{r,s}^{-1}\circ s_{rs}\circ\operatorname{vec}_{r,s}$. This operator shifts all entries of a matrix to the right except for the entries in the last column, which are shifted down and moved into the first column. For $k=1,2,\ldots$, the operator $S_{r,s}^k$ is defined as the $k$\hyp{}fold composition of $S_{r,s}$. In the following, we write $S\coloneqq S_{p+1,n}$. With this notation it is clear that $\wt\X = \left[\begin{array}{cc} 0 & \1_p \end{array}\right]\chi_n(S)\ZZ$. In order to obtain \cref{eq-Xtilderep}, we observe that the action of $S$ can be written in terms of matrix multiplications as $S\ZZ= K_{p+1}\ZZ E+\ZZ K_n^{\T}$, where the entries of the $n\times n$ matrix $E$ are all zero except for a one in the lower left corner. Using the fact that $E({K_n^{\T}})^m E$ is zero for every non\hyp{}negative integer $m$ it follows by induction that $S^k$, $k=1,\ldots,n$, acts like
\begin{align*}
S^k\ZZ =& \ZZ {\left(K_n^{\T}\right)}^k+K_{p+1}\ZZ\sum_{i=1}^k{{\left(K_n^{\T}\right)}^{k-i}E{\left(K_n^{\T}\right)}^{i-1}}
  = \left[\begin{array}{cc} \1_{p+1} & K_{p+1} \end{array}\right]\left(\begin{array}{cc}\ZZ & 0 \\ 0 & \ZZ\end{array}\right)\left[\begin{array}{c} {\left(K_n^{\T}\right)}^k \\ K_n^{n-k} \end{array}\right].
\end{align*}
This implies that
\begin{align*}
\wt\X =& 
  \left[\begin{array}{cc} 0 & \1_p \end{array}\right]\left[\begin{array}{cc} \1_{p+1} & K_{p+1} \end{array}\right]\left(\begin{array}{cc}\ZZ & 0 \\ 0 & \ZZ\end{array}\right)\sum_{k=0}^n{c_k\left[\begin{array}{c} {\left(K_n^{\T}\right)}^k \\ K_n^{n-k} \end{array}\right]}
  = \left[\begin{array}{cccc}0 & \1_p & \1_p & 0\end{array}\right]\left(\begin{array}{cc}\ZZ & 0 \\ 0 & \ZZ\end{array}\right)\left[\begin{array}{c}\chi_n\left(K_n^{\T}\right)\\\bar\chi_n\left(K_n\right)\end{array}\right]
\end{align*}
and completes the proof.
\end{proof}

While the last lemma gives an explicit description of the relation between $\ZZ$ and $\wt\X$, it is impractical for directly determining the limiting spectral distribution of $p^{-1}\wt\X\wt\X^{\T}$. The reason is that $\ZZ$ appears twice in the central block-diagonal matrix and is moreover multiplied by some deterministic matrices from both the left and the right. The LSD of the product of three random matrices has been computed in the literature \citep{Zhang2006}, but this result is not applicable in our situation due to the appearance of the random block matrix in \cref{eq-Xtilderep}.  Sample covariance matrices derived from random block matrices have been considered in \citet{speicher2008}. However, they only treat the Gaussian case and, more importantly, do not cover the case of a non\hyp{}trivial population covariance matrix.
We are thus not aware of any result allowing to derive the LSD of $p^{-1}\wt\X\wt\X^{\T}$ directly from \cref{lemma-wtXrep}.

The next proposition allows us to circumvent this problem. It is shown that, at least asymptotically and at the cost of slightly changing the size of the involved matrices, one can simplify the structure of $\wt\X$ so that $\ZZ$ appears only once and is multiplied by a deterministic matrix only from the right.

\begin{proposition}
\label{proposition-whX}
Let $\ZZ$, $K_n$ and $\chi_n$, $\bar\chi_n$ be as before and define the matrix $\wh\X \coloneqq \ZZ\Omega\in\R^{(p+1)\times(n+1)}$, where
\begin{equation}
\label{eq-DefOmega}
\Omega = \left[\begin{array}{cccc}0 & \1_n & \1_n & 0\end{array}\right] \left[\begin{array}{c}\chi_{n+1}\left(K_{n+1}^{\T}\right)\\\bar\chi_{n+1}\left(K_{n+1}\right)\end{array}\right]\in\R^{n\times (n+1)}.
\end{equation}
If the empirical spectral distribution of $p^{-1}\wh\X\wh\X^{\T}$ converges to a limit, then the empirical spectral distribution of $p^{-1}\wt\X\wt\X^{\T}$ converges to the same limit.
\end{proposition}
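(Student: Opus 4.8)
The plan is to compare $\wh\X$ with $\wt\X$ via the trace criterion, \cref{lemma-trace}. Since $\wh\X$ has one more row and one more column than $\wt\X$, I would first pass to matrices of a common size: let $\wt\X'\in\R^{(p+1)\times(n+1)}$ be obtained from $\wt\X$ by prepending one zero row and one zero column. Prepending a zero column leaves $\wt\X\wt\X^{\T}$ unchanged, and prepending a zero row only appends a single null eigenvalue, so $F^{p^{-1}\wt\X'\wt\X'^{\T}}$ and $F^{p^{-1}\wt\X\wt\X^{\T}}$ have the same weak limit whenever one of them exists; likewise, replacing the normalisation $p^{-1}$ by $(p+1)^{-1}$ and $n$ by $n+1$ is asymptotically irrelevant because $p/(p+1)\to1$. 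Thus it suffices to verify conditions \labelcref{lemma-trace-item1,lemma-trace-item2,lemma-trace-item3} of \cref{lemma-trace} for the pair $A_{1,n}=\wh\X$, $A_{2,n}=\wt\X'$.

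The next step is to make the entries explicit. Evaluating \cref{eq-DefOmega} in terms of the nilpotent shift $K_{n+1}$ and the polynomials $\chi_{n+1},\bar\chi_{n+1}$ — the same computation as in \cref{lemma-wtXrep}, one size larger — exhibits $\Omega$ as a banded matrix with a triangular corner, and hence
\begin{equation*}
(\wh\X)_{i,t}=\sum_{j=0}^{t-2}c_j\,Z_{(i-2)n+t-1-j}+\sum_{j=t+1}^{n+1}c_j\,Z_{(i-1)n+t+1-j},\qquad 1\le i\le p+1,\ 1\le t\le n+1.
\end{equation*}
In particular row $i$ of $\wh\X$ involves only the block $\{Z_{(i-2)n+1},\dots,Z_{(i-1)n}\}$, so the rows of $\wh\X$ are independent — this is precisely the structural simplification the proposition is about. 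Since $\wt\X'_{i,t}=\wt\X_{i-1,t-1}=\sum_{j=0}^{n}c_j Z_{(i-2)n+t-1-j}$ for $i,t\ge2$ and vanishes on the first row and the first column, the difference $D\coloneqq\wh\X-\wt\X'$ satisfies
\begin{equation*}
D_{i,t}=\sum_{j=t+1}^{n+1}c_j\,Z_{(i-1)n+t+1-j}-\sum_{j=t-1}^{n}c_j\,Z_{(i-2)n+t-1-j}\qquad(i\ge2,\ t\ge2),
\end{equation*}
a sum of mean-zero terms built from pairwise distinct $Z$'s in which only the coefficients $c_j$ with $j\ge t-1$ occur, while $D_{i,t}=(\wh\X)_{i,t}$ on the first row and the first column.

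For condition \labelcref{lemma-trace-item1} I would estimate $\tr DD^{\T}=\sum_{i,t}D_{i,t}^{2}$: from the formula for $D_{i,t}$ one gets $\E D_{i,t}^{2}\le2\sum_{j\ge(t-1)\vee1}\oc_j^{2}=O((t-1)^{-1-2\delta})$ for $t\ge2$, and $\E D_{i,t}^{2}\le\gamma(0)$ on the boundary; since $\sum_{t\ge1}\sum_{j\ge t}\oc_j^{2}=\sum_{j\ge1}j\,\oc_j^{2}<\infty$ and there are only $O(p)$ boundary entries, $\E\tr DD^{\T}=O(p)$. For the variance, row $i$ of $D$ involves only the blocks of $\ZZ$ indexed $i-1$ and $i$, so rows $i,i'$ of $D$ are independent once $|i-i'|\ge2$; combined with the elementary bound $\E(\sum_k a_k Z_{\sigma(k)})^{4}\le(\sigma_4+3)(\sum_k a_k^{2})^{2}$ for injective $\sigma$ this gives $\var\tr DD^{\T}\le3(\sigma_4+3)\sum_i(\sum_t\E D_{i,t}^{2})^{2}=O(n^{2})$, the dominant term being the single, non-negligible first row. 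Hence $\E[\tr DD^{\T}]^{2}=(\E\tr DD^{\T})^{2}+\var\tr DD^{\T}=O(p^{2})$, so $p^{-4}\E[\tr DD^{\T}]^{2}=O(n^{-2})$ and \labelcref{lemma-trace-item1} holds with $\epsilon=1$. For $A_{2,n}=\wt\X'$, conditions \labelcref{lemma-trace-item2,lemma-trace-item3} reduce, after discarding the cosmetic zero row and column, to the bounds $\E\Sigma_{\wt\X}=O(1)$, $\var\Sigma_{\wt\X}=O(n^{-2})$ already established in Step 2 of the proof of \cref{truncation}. For $A_{1,n}=\wh\X$ I would use the decomposition $\tr\wh\X\wh\X^{\T}=\sum_{i=1}^{p+1}\ZZ_i\,\Omega\Omega^{\T}\ZZ_i^{\T}$, with $\ZZ_i$ the independent rows of $\ZZ$ and $\Omega\Omega^{\T}$ positive semidefinite: then $\E\tr\wh\X\wh\X^{\T}=(p+1)\sum_{a,t}\Omega_{a,t}^{2}\le(p+1)\,n\,\gamma(0)$, giving \labelcref{lemma-trace-item2}, and $\var\tr\wh\X\wh\X^{\T}=\sum_i\var(\ZZ_i\Omega\Omega^{\T}\ZZ_i^{\T})\le(\sigma_4+4)(p+1)\sum_{s,t}(\Omega^{\T}\Omega)_{st}^{2}$. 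The entry $(\Omega^{\T}\Omega)_{st}$ equals, up to a wrap-around correction, the autocovariance $\gamma(s-t)$, and the polynomial decay of $(\oc_j)$ makes both pieces square-summable in $s-t$ (the correction decaying even faster), whence $\sum_{s,t}(\Omega^{\T}\Omega)_{st}^{2}=O(n)$ and $p^{-4}\var\tr\wh\X\wh\X^{\T}=O(p^{-3}n)=O(n^{-2})$; thus \labelcref{lemma-trace-item3} holds with $\epsilon=1$ as well. \cref{lemma-trace} then yields the proposition.

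I expect the main difficulties to be two bookkeeping-heavy points. The first is deriving the closed form of $(\wh\X)_{i,t}$ from $\wh\X=\ZZ\Omega$: one must carefully track which coefficients enter each entry, and must keep in mind that the first row and first column of $\wh\X$ are genuinely non-negligible (although they number only $O(p)$). The second is the estimate $\sum_{s,t}(\Omega^{\T}\Omega)_{st}^{2}=O(n)$, where the wrap-around terms of the circular autocovariances have to be controlled; this is the one place in the argument where assumption \labelcref{maintheorem-item2} — polynomial decay with $\delta>0$, hence $\sum_h h^{-1-2\delta}<\infty$ — is genuinely used. Everything else is of the same flavour as the estimates in the proof of \cref{truncation}.
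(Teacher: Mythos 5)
Your proposal is correct and follows essentially the same route as the paper: embed $\wt\X$ into a $(p+1)\times(n+1)$ matrix by padding with a zero row and column (the paper's $\wtt\X$ is exactly your $\wt\X'$), observe that this changes the spectral distribution only by a single atom at zero, and then verify the three conditions of \cref{lemma-trace}, reusing Step~2 of the proof of \cref{truncation} for the $\wt\X$ side. The only difference is in execution: where the paper bounds the second moments by a term-by-term fourth-moment pairing expansion of the five pieces of $\Delta_{\wh\X,\wtt\X}$, you exploit the row-independence of $\wh\X=\ZZ\Omega$ (and the two-block dependence range of the rows of $D$) together with standard quadratic-form variance bounds and the estimate $\tr(\Omega\Omega^{\T})^2=O(n)$ — a somewhat cleaner but equivalent way of obtaining the same $O(n^{-2})$ rates.
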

\begin{proof}
In order to be able to compare the limiting spectral distributions of $p^{-1}\wt\X\wt\X^{\T}$ and $p^{-1}\hat\X\hat\X^{\T}$ in spite of their dimensions being different, we introduce the matrix $\wtt\X = \left[\begin{array}{cc}0 & 0\\0&\wt\X\end{array}\right]\in\R^{(p+1)\times(n+1)}$. Clearly, $F^{p^{-1}\wtt\X\wtt\X^{\T}}=(p+1)^{-1}\delta_0+p(p+1)^{-1}F^{p^{-1}\wt\X\wt\X^{\T}}$, which implies equality of the limiting spectral distributions provided either of the two, and hence both, exists. It is therefore sufficient to show that the LSD of $p^{-1}\wh\X\wh\X^{\T}$ and $p^{-1}\wtt\X\wtt\X^{\T}$ are identical; this will be done by verifying the three conditions of \cref{lemma-trace}. The remainder of the proof will be divided in two parts. In the first part we check the validity of assumption \labelcref{lemma-trace-item1} about the difference $\wh\X-\wtt\X$, whereas in the second one we consider the terms $\tr\wh\X\wh\X^{\T}$ and $\tr\wtt\X\wtt\X^{\T}$, which appear in conditions \labelcref{lemma-trace-item2,lemma-trace-item3}.
\paragraph{\bf Step 1}
Using the definitions of $\wh{\X}$ and  $\wtt\X$, it follows that
\begin{align}
\label{eq-trxhxttminus}
\Delta_{\wh\X,\wtt\X}\coloneqq&\frac{1}{p^2}\tr\left(\wh\X-\wtt\X\right)\left(\wh\X-\wtt\X\right)^{\T} = \frac{1}{p^2}\sum_{i=1}^{p+1}\sum_{j=1}^{n+1}  {\left[\wh\X_{ij}-\wtt\X_{ij}\right]^2}\notag\\
  \leq& \frac{2}{p^2}\sum_{i=2}^{p+1}\sum_{j=2}^{n+1}\left[\sum_{k,k'=j}^n{Z_{(i-2)n+k}Z_{(i-2)n+k'}c_{j-k+n+1}c_{j-k'+n+1}}+\sum_{k,k'=j-1}^n{Z_{(i-3)n+k}Z_{(i-3)n+k'}c_{j-k+n-1}c_{j-k'+n-1}}\right]\notag\\
  &+\frac{1}{p^2}\sum_{i=1}^{p+1}{\sum_{k,k'=1}^n{Z_{(i-2)n+k}Z_{(i-2)n+k'}c_{n-k+2}c_{n-k'+2}}}+\frac{2}{p^2}\sum_{j=2}^{n+1}{\sum_{k,k'=1}^{j-1}{Z_{-n+k}Z_{-n+k'}c_{j-k-1}c_{j-k'-1}}} \notag\\ &+\frac{2}{p^2}\sum_{j=2}^{n+1}{\sum_{k,k'=j}^n{Z_{-n+k}Z_{-n+k'}c_{j-k+n+1}c_{j-k'+n+1}}} \eqqcolon \sum_{i=1}^5\Delta_{\wh\X,\wtt\X}^{(i)},
\end{align}
where the elementary inequality $(a+b)^2\leq 2a^2+2b^2$ was used twice. In order to show that the variances of expression \labelcref{eq-trxhxttminus} are summable, we consider each term in turn. For the second moment of the first term of \cref{eq-trxhxttminus} we obtain
\begin{align*}
\mu_{2,\Delta_{\wh\X,\wtt\X}^{(1)}}\coloneqq\E\left(\Delta_{\wh\X,\wtt\X}^{(1)}\right)^2= 
 & \frac{4}{p^4}\sum_{i,i'=2}^{p+1}\sum_{j,j'=2}^{n+1}\sum_{k,k'=1}^{n-j+1}\sum_{m,m'=1}^{n-j'+1}\E\left[Z_{(i-1)n-k+1}Z_{(i-1)n-k'+1}Z_{(i'-1)n-m+1}Z_{(i'-1)n-m'+1}\right]\times\\
  &\qquad\qquad\qquad\qquad\qquad\qquad c_{j+k}c_{j+k'}c_{j'+m}c_{j'+m'}.
\end{align*}
As before we consider all configurations where above expectation is not zero. The expectation equals $\sigma_4$ if $i=i'$ and $k,k',m,m'$ are equal, and, hence,
\begin{align*}
\mu_{2,\Delta_{\wh\X,\wtt\X}^{(1)}}^{\matchD} \leq \frac{4\sigma_4}{p^4} \sum_{i=2}^{p+1} \sum_{k=1}^{n} \left( \sum_{j=2}^{n+1} c_{j+k}^2 \right)^2 \leq \frac{4\sigma_4}{p^3}  \sum_{k=1}^{n} \oc_{k}^2  \left( \sum_{j=2}^{n+1} \oc_{j} \right)^2 = O(n^{-3}).
\end{align*}
The expectation is one if the four $Z$ can be collected in two non\hyp{}equal pairs. The first term equals the second, and the third equals the fourth if $k=k'$ and $m=m'$, and thus
\begin{align*}
\mu_{2,\Delta_{\wh\X,\wtt\X}^{(1)}}^{\matchA} &= \frac{4}{p^4}\sum_{i,i'=2}^{p+1}\sum_{j,j'=2}^{n+1}\sum_{k=1}^{n-j+1}\sum_{m=1}^{n-j'+1} c_{j+k}^2 c_{j'+m}^2 - \mu_{2,\Delta_{\wh\X,\wtt\X}^{(1)}}^{\matchD}=\frac{4}{p^2} \left( \sum_{j=2}^{n+1}\sum_{k=1}^{n-j+1} c_{j+k}^2 \right)^2 - \mu_{2,\Delta_{\wh\X,\wtt\X}^{(1)}}^{\matchD} = O(n^{-2}).
\end{align*}
Likewise, the contribution from pairing the first factor with the third, and the second with the fourth, can be estimated as
\begin{align*}
\left|\mu_{2,\Delta_{\wh\X,\wtt\X}^{(1)}}^{\matchB}\right| &\leq \frac{4}{p^4}\sum_{i'=2}^{p+1}\sum_{j,j'=2}^{n+1}\sum_{k,k'=1}^{n} |c_{j+k}c_{j+k'} c_{j'+k}c_{j'+k'}| + \mu_{2,\Delta_{\wh\X,\wtt\X}^{(1)}}^{\matchD} \leq \frac{4}{p^3} \left(\sum_{j=1}^{n+1}\oc_{j} \right)^4 + \mu_{2,\Delta_{\wh\X,\wtt\X}^{(1)}}^{\matchD} = O(n^{-3}).
\end{align*}
Obviously, the configuration $\mu_{2,\Delta_{\wh\X,\wtt\X}^{(1)}}^{\matchC}$ can be handled the same way as $\mu_{2,\Delta_{\wh\X,\wtt\X}^{(1)}}^{\matchB}$ above. Thus we have shown that the second moment of $\Delta_{\wh\X,\wtt\X}^{(1)}$, the first term in \cref{eq-trxhxttminus}, is of order $n^{-2}$. This can be shown for the second term in \cref{eq-trxhxttminus} in the same way. We now consider the second moment of the third term in \cref{eq-trxhxttminus}:
\begin{align*}
\mu_{2,\Delta_{\wh\X,\wtt\X}^{(3)}}\coloneqq&\E\left( \Delta_{\wh\X,\wtt\X}^{(3)} \right)^2 = \frac{1}{p^4}\sum_{i,i'=1}^{p+1}\sum_{\substack{k,k'\\m,m'}=1}^n\E\left[Z_{(i-2)n+k}Z_{(i-2)n+k'}Z_{(i'-2)n+m}Z_{(i'-2)n+m'}\right] c_{n-k+2}c_{n-k'+2}c_{n-m+2}c_{n-m'+2}.
\end{align*}
Distinguishing the same cases as before, we have $\mu_{2,\Delta_{\wh\X,\wtt\X}^{(3)}}^{\matchD} = \sigma_4\frac{p+1}{p^4} \sum_{k=1}^n c_{n-k+2}^4 = O(n^{-3})$ and, thus,
\begin{align*}
\mu_{2,\Delta_{\wh\X,\wtt\X}^{(3)}}^{\matchA} &= \frac{(p+1)^2}{p^4} \left( \sum_{k=1}^n c_{n-k+2}^2 \right)^2 - \mu_{2,\Delta_{\wh\X,\wtt\X}^{(3)}}^{\matchD} = O(n^{-2}),
\end{align*}
as well as $\mu_{2,\Delta_{\wh\X,\wtt\X}^{(3)}}^{\matchC} = \mu_{2,\Delta_{\wh\X,\wtt\X}^{(3)}}^{\matchB} = O(n^{-3})$. Thus, the second moment of the third term in \cref{eq-trxhxttminus} is of order $O(n^{-2})$; repeating the foregoing arguments, it can be seen that the second moments of $\Delta_{\wh\X,\wtt\X}^{(4)}$ and $\Delta_{\wh\X,\wtt\X}^{(5)}$, the two last terms in  \cref{eq-trxhxttminus}, are of order $O(n^{-2})$ as well, so that we have shown that
\begin{equation*}
\frac{1}{p^4}\E\left[\tr\left(\wh\X-\wtt\X\right)\left(\wh\X-\wtt\X\right)^{\T}\right]^2 =\E\left(\Delta_{\wh\X,\wtt\X}\right)^2\leq5\sum_{i=1}^5{\mu_{2,\Delta_{\wh\X,\wtt\X}^{(i)}}}= O(n^{-2}).
\end{equation*}

\paragraph{\bf Step 2}
In this step we shall prove that both $\Sigma_{\wh\X}\coloneqq p^{-2}\tr\wh\X\wh\X^{\T}$ and $\Sigma_{\wtt\X}\coloneqq p^{-2}\tr\wtt\X\wtt\X^{\T}$ have bounded first moments, and that their variances are summable sequences in $n$, i.\,e.\ we check conditions \labelcref{lemma-trace-item2,lemma-trace-item3} of \cref{lemma-trace}. Since $\tr\wtt\X\wtt\X^{\T}$ is equal to $\tr\wt\X\wt\X^{\T}$, the claim about $\Sigma_{\wtt\X}$ has already been shown in the second step of the proof of \cref{truncation}. For the first term one finds, by the definition of $\wh\X$, that
\begin{align*}
\Sigma_{\wh\X} =& \frac{1}{p^2}\sum_{i=1}^{p+1} \sum_{j=1}^{n+1} \left(\sum_{k=1}^{j-1}{Z_{(i-2)n+k}c_{j-k-1}}+\sum_{k=j}^n{Z_{(i-2)n+k}c_{j-k+n+1}}\right)^2 \nonumber\\
\leq& \frac{2}{p^2}\sum_{i=1}^{p+1} \sum_{j=1}^{n+1} \sum_{k,k'=1}^{j-1} Z_{(i-2)n+k}c_{j-k-1} Z_{(i-2)n+k'}c_{j-k'-1} \nonumber\\
&+ \frac{2}{p^2}\sum_{i=1}^{p+1} \sum_{j=1}^{n+1} \sum_{k,k'=j}^{n} Z_{(i-2)n+k}c_{j-k+n+1} Z_{(i-2)n+k'}c_{j-k'+n+1} \eqqcolon \Sigma_{\wh\X}^{(1)}+\Sigma_{\wh\X}^{(2)}.
 \label{trplushut}
\end{align*}
Clearly, the first two moments of $\Sigma_{\wh\X}^{(1)}$ are given by
\begin{equation*}
\mu_{1,\Sigma_{\wh\X}^{(1)}}\coloneqq\E\Sigma_{\wh\X}^{(1)}=\frac{2}{p^2}\sum_{i=1}^{p+1}\sum_{j=1}^{n+1}\sum_{k,k'=1}^{j-1}{\E\left[Z_{(i-2)n+k} Z_{(i-2)n+k'}\right]c_{j-k-1}c_{j-k'-1}}=\frac{2(p+1)}{p^2}\sum_{j=1}^{n+1}\sum_{k=1}^{j-1}{c_{k-1}^2},
\end{equation*}
and
\begin{align*}
\mu_{2,\Sigma_{\wh\X}^{(1)}}\coloneqq&\E\left(\Sigma_{\wh\X}^{(1)}\right)^2=\frac{4}{p^4} \sum_{i,i'=1}^{p+1} \sum_{j,j'=1}^{n+1} \sum_{k,k'=1}^{j-1} \sum_{m,m'=1}^{j'-1} \E(Z_{(i-2)n+k}Z_{(i-2)n+k'}Z_{(i'-2)n+m}Z_{(i'-2)n+m'}) 
\times \\
& \qquad\qquad\qquad\qquad\qquad\qquad\qquad\qquad\qquad c_{j-k-1} c_{j-k'-1} c_{j'-m-1} c_{j'-m'-1}.
\end{align*}
We separately consider the case that all four factors are equal, and the three possible pairings of the four $Z$. If all four $Z$ are equal, it must hold that $i=i'$, $k=k'=m=m'$, with contribution
\begin{align*}
\mu_{2,\Sigma_{\wh\X}^{(1)}}^{\matchD}=&\frac{4\sigma_4}{p^4}\sum_{i=1}^{p+1}\sum_{j,j'=1}^{n+1}\sum_{k=1}^{\min\{j,j'\}-1}c_{j-k-1}^2c_{j'-k-1}^2\\
\leq&\frac{4\sigma_4(p+1)}{p^4}\sum_{j,j'=1}^{n+1}\oc_{j-\min\{j,j'\}}\oc_{j'-\min\{j,j'\}}\sum_{k=1}^{\min\{j,j'\}-1}\oc_{k-1}^2\leq \frac{4\sigma_4(p+1)}{p^4}\sum_{j,j'=1}^{n+1}\oc_{0}\oc_{|j-j'|}\sum_{k=1}^{n}\oc_{k-1}^2.
\end{align*}
Introducing the new summation variable $\delta_j\coloneqq j-j'$, one finds that
\begin{equation}
\label{eq-mu2SigmaXhat1mathD}
\mu_{2,\Sigma_{\wh\X}^{(1)}}^{\matchD} \leq \frac{4\sigma_4(p+1)(n+1)}{p^4}\oc_0\left[\oc_0+2\sum_{\delta_j=1}^n\oc_{\delta_j}\right]\sum_{k=1}^{n}\oc_{k-1}^2 = O(n^{-2}).
\end{equation}
The first factor being paired with the second, and the third with the fourth, means that $k=k'$, $m=m'$, and $m\neq (i-i')n+k$, so that the contribution of this configuration is given by
\begin{align}
\label{eq-mu2SigmaXhat1mathA}
\mu_{2,\Sigma_{\wh\X}^{(1)}}^{\matchA}=&\frac{4}{p^4}\sum_{i,i'=1}^{p+1}\sum_{j,j'=1}^{n+1}\sum_{k=1}^{j-1}\sum_{m=1}^{j'-1}c_{j-k-1}^2c_{j'-m-1}^2-\mu_{2,\Sigma_{\wh\X}^{(1)}}^{\matchD} = \left(\mu_{1,\Sigma_{\wh\X}^{(1)}}\right)^2 +O(n^{-2}).
\end{align}
For the $\matchB$ pairing, the constraints are $i=i'$, $k=m$, $k'=m'$, $k\neq k'$, and the corresponding contribution is
\begin{align}
\label{eq-mu2SigmaXhat1mathB}
\mu_{2,\Sigma_{\wh\X}^{(1)}}^{\matchB}=&\frac{4}{p^4}\sum_{i=1}^{p+1}\sum_{j,j'=1}^{n+1}\sum_{k,k'=1}^{\min\{j,j'\}-1}{c_{j-k-1}c_{j-k'-1}c_{j'-k-1}c_{j'-k'-1}}-\mu_{2,\Sigma_{\wh\X}^{(1)}}^{\matchD}\notag \\
\leq& \frac{4(p+1)}{p^4}\sum_{j,j'=1}^{n+1}\oc_{j-\min\{j,j'\}}\oc_{j'-\min\{j,j'\}}\sum_{k,k'=1}^{\min\{j,j'\}-1}\oc_{k-1}c_{k'-1}+O(n^{-2})\notag\\
\leq& \frac{4(p+1)(n+1)}{p^4}\oc_0\left[\oc_0+2\sum_{\delta_j=1}^n\oc_{\delta_j}\right]\sum_{k,k'=1}^{n}\oc_{k-1}\oc_{k'-1} +O(n^{-2})= O(n^{-2}).
\end{align}
Renaming the summation indices shows that $\mu_{2,\Sigma_{\wh\X}^{(1)}}^{\matchC}=\mu_{2,\Sigma_{\wh\X}^{(1)}}^{\matchB}$. Combining this with the displays \labelcref{eq-mu2SigmaXhat1mathD,eq-mu2SigmaXhat1mathA,eq-mu2SigmaXhat1mathB}, it follows that $\var\Sigma_{\wh\X}^{(1)}=\mu_{2,\Sigma_{\wh\X}^{(1)}}-\mu_{1,\Sigma_{\wh\X}^{(1)}}^2=O(n^{-2})$. Since a very similar reasoning can be applied to $\Sigma_{\wh\X}^{(2)}$, and $p^{-4}\var\tr\wh\X\wh\X^{\T}$ is smaller than $2\var\Sigma_{\wh\X}^{(1)} + 2\var\Sigma_{\wh\X}^{(2)}$, we conclude that $p^{-4}\var\tr\wh\X\wh\X^{\T}$ is of order $O(n^{-2})$.
\end{proof}
The intention behind \cref{proposition-whX} was to allow the application of results about the limiting spectral distribution of matrices of the form $\ZZ H\ZZ^{\T}$, where $\ZZ$ is an i.\,i.\,d.\ matrix, and $H$ is a positive semidefinite matrix. Expressions for the Stieltjes transform of the LSD of such matrices in terms of the LSD of $H$ have been obtained by \citet{Marchenko1967,silverstein1995}, and, in the most general form, by \citet{pan2010}. The next lemma shows that in the current context the population covariance matrix $H$ has the same LSD as the auto\hyp{}covariance matrix $\Gamma$ of the process $X_t$, which is defined in terms of the auto\hyp{}covariance function $\gamma(h) = \sum_{j=0}^\infty{c_jc_{j+|h|}}$ by $\Gamma = (\gamma(i-j))_{ij}$; this correspondence is used to characterize the LSD of $H$ by the spectral density $f$ associated with the coefficients $(c_j)_j$.

\begin{lemma}
\label{lemma-OmegaGamma}
Let $\Omega$ be given by \cref{eq-DefOmega}. The limiting spectral distribution of the matrix $\Omega\Omega^{\T}$ exists and is the same as the limiting spectral distribution of the auto\hyp{}covariance matrix $\Gamma$. It therefore satisfies
\begin{equation}
\label{eq-grayLSD}
\int h(\lambda)\hat F^{\Omega\Omega^{\T}}(\dd\lambda) = \frac{1}{2\pi}\int_0^{2\pi}h(f(\omega))\dd\omega,
\end{equation}
for every continuous function $h$.
\end{lemma}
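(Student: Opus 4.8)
The plan is to recognise $\Omega$ as a sub-array of a circulant matrix, whose spectral distribution can be written down explicitly, and then to obtain the assertion about $\Gamma$ from the classical theory of Toeplitz matrices. First I would simplify \cref{eq-DefOmega}. Since $K_{n+1}$ is nilpotent of index $n+1$, the matrix $\chi_{n+1}(K_{n+1}^{\T})$ is the upper-triangular banded Toeplitz matrix with $(i,j)$-entry $c_{j-i}$ for $j\ge i$, and $\bar\chi_{n+1}(K_{n+1})$ is the lower-triangular one with $(i,j)$-entry $c_{n+1-i+j}$ for $i\ge j$. Performing the block-row selection in \cref{eq-DefOmega} — which forms, for $i=1,\dots,n$, the sum of row $i+1$ of $\chi_{n+1}(K_{n+1}^{\T})$ and row $i$ of $\bar\chi_{n+1}(K_{n+1})$ — one obtains, for $i=1,\dots,n$ and $j=1,\dots,n+1$,
\[
\Omega_{ij}=c_{(j-i-1)\bmod(n+2)} .
\]
In other words, $\Omega$ arises from the $(n+2)\times(n+2)$ circulant matrix $\mathcal C$ with first row $(c_0,c_1,\dots,c_{n+1})$ by deleting its first and last rows and its last column. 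As $\mathcal C$ is a real circulant, it is diagonalised by the discrete Fourier matrix, so the eigenvalues of $\mathcal C\mathcal C^{\T}$ are $|\chi_{n+1}(\omega_k)|^2$, $k=0,\dots,n+1$, where $\omega_k=\ee^{2\pi\ii k/(n+2)}$.

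Next I would identify the weak limit of $F^{\mathcal C\mathcal C^{\T}}$. By assumption \cref{maintheorem-item2} one has $\sum_j|c_j|\le\sum_j\oc_j<\infty$, so $\chi_{n+1}(\ee^{\ii\theta})\to\sum_{j\ge0}c_j\ee^{\ii j\theta}$ uniformly in $\theta$; moreover a direct computation from $\gamma(h)=\sum_jc_jc_{j+|h|}$ shows $f(\theta)=\bigl|\sum_{j\ge0}c_j\ee^{\ii j\theta}\bigr|^2$. Fixing a bounded continuous $h$ and combining the uniform convergence with the uniform continuity of $h$ on the bounded range of $|\chi_{n+1}|^2$, and then applying a Riemann-sum argument to the continuous integrand $h\circ f$, I get
\[
\frac{1}{n+2}\sum_{k=0}^{n+1}h\bigl(|\chi_{n+1}(\omega_k)|^2\bigr)=\frac{1}{n+2}\sum_{k=0}^{n+1}h\bigl(f(2\pi k/(n+2))\bigr)+o(1)\;\longrightarrow\;\frac{1}{2\pi}\int_0^{2\pi}h(f(\omega))\,\dd\omega
\]
as $n\to\infty$. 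Hence $F^{\mathcal C\mathcal C^{\T}}$ converges weakly to the probability measure $\nu$ characterised by $\int h\,\dd\nu=\tfrac1{2\pi}\int_0^{2\pi}h(f(\omega))\,\dd\omega$.

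It then remains to transfer this to $\Omega\Omega^{\T}$ and to $\Gamma$. Since $\Omega$ is $\mathcal C$ with two rows and one column removed, $\Omega\Omega^{\T}$ equals the $n\times n$ principal submatrix of $\mathcal C\mathcal C^{\T}$ on the retained indices minus the rank-one matrix $ww^{\T}$ with $w=(c_n,c_{n-1},\dots,c_1)^{\T}$, the latter accounting for the deleted column; by Cauchy interlacing and the rank inequality (see e.g.\ \citet[Appendix A]{bai2010}) this yields $\sup_x\bigl|F^{\Omega\Omega^{\T}}(x)-F^{\mathcal C\mathcal C^{\T}}(x)\bigr|=O(n^{-1})$, so $F^{\Omega\Omega^{\T}}$ converges weakly to $\nu$ as well, which is \cref{eq-grayLSD}. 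On the other hand, $\Gamma=(\gamma(i-j))_{ij}$ is the Toeplitz matrix generated by the symbol $f$, whose Fourier coefficients $\gamma(h)$ obey $\sum_h|\gamma(h)|\le\bigl(\sum_j\oc_j\bigr)^2<\infty$, so Szegő's first limit theorem gives $\hat F^{\Gamma}=\nu$ too, and the two limiting spectral distributions coincide. I expect the main obstacle to be the first step — recognising that the expression \cref{eq-DefOmega} collapses to a circulant extract; once that combinatorial identity is in place, the circulant eigenvalue formula, the equidistribution/Riemann-sum computation, the interlacing estimate, and the classical Szegő theorem for $\Gamma$ are all routine.
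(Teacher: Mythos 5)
Your proof is correct and follows essentially the same route as the paper's (two\hyp{}line) proof: recognise $\Omega$ as a circulant with a row/column or two removed, transfer its spectral distribution via rank and interlacing inequalities, and invoke Szeg\H{o}'s theorem for $\Gamma$. You supply the details the paper dismisses as ``standard computations'', and your explicit formula $\Omega_{ij}=c_{(j-i-1)\bmod(n+2)}$ is in fact more accurate than the paper's stated $c_{(n+j-i)\bmod(n+1)}$, which is off by one in the lower\hyp{}triangular part; the discrepancy is asymptotically immaterial.
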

\begin{proof}
The first claim follows by standard computations from the fact that $\Omega$ is, except for one missing row, a circulant matrix with entries $\Omega_{ij}=c_{n+j-i\mod (n+1)}$, and \citet[Corollaries A.41 and A.42]{bai2010}. The second claim is an application of Szeg\H{o}'s limit theorem about the LSD of Toeplitz matrices; see \citet[Theorem XVIII]{szego1920} for the original result or, e.\,g., \citet[Sections 5.4 and 5.5]{boettcher1999intro} for a modern treatment.
\end{proof}
\begin{proof}[Proof of \cref{thm-maintheorem}]
According to \cref{proposition-whX}, the matrix $\wh\X\wh\X^{\T}$ is of the form $\ZZ\Omega\Omega^{\T}\ZZ^{\T}$, where $\Omega$ is given by \cref{eq-DefOmega}. Using \citet[Theorem 1]{pan2010} and the fact that, by \cref{lemma-OmegaGamma}, the limiting spectral distribution of ${\Omega\Omega^{\T}}$ exists, it follows that the limiting spectral distribution $\hat F^{p^{-1}\wh\X\wh\X^{\T}}$ exists. Therefore, the  combination of \cref{truncation,proposition-whX} shows that the limiting spectral distribution of $p^{-1}\X\X^{\T}$ also exists and is the same as that of $p^{-1}\wh\X\wh\X^{\T}$. \citet[equation (1.2)]{pan2010} thus implies that the Stieltjes transform of $\hat F^{p^{-1}\X\X^{\T}}$ is the unique mapping $s_{\hat F^{p^{-1}\X\X^{\T}}}:\C^+\to\C^+$ which solves
\begin{equation*}
\frac{1}{s_{\hat F^{p^{-1}\X\X^{\T}}}(z)} = - z +  y \int_\R \frac{\lambda}{1+\lambda s_{\hat F^{p^{-1}\X\X^{\T}}}(z)}\hat F^{\Omega\Omega^{\T}}( \dd\lambda),
\end{equation*}
and \cref{eq-grayLSD} from \cref{lemma-OmegaGamma} completes the proof.
\end{proof}

\section{Sketch of an alternative proof of Theorem \ref{thm-maintheorem}}
\label{section-heuristic}
In this section we indicate how \cref{thm-maintheorem} could be proved alternatively using the methods employed in \citet{pfaffel2010}. We denote by $\wt{\X}_{(\alpha)}$ the matrix which is defined as in \cref{eq-defX} but with the linear process being truncated at $\lfloor n^\alpha \rfloor$ with $0<\alpha<1$, i.\,e.\, $\wt{\X}_{(\alpha)}=\left(\sum_{j=0}^{\lfloor n^\alpha \rfloor}{c_j Z_{(i-1)n+t-j}}\right)_{it}$. If $1-\alpha$ is sufficiently small, then an adaptation of the proof of \cref{truncation} to this setting shows that $p^{-1}\X\X^\T$ and $p^{-1}\wt{\X}_{(\alpha)}\wt{\X}_{(\alpha)}^\T$ have the same limiting spectral distribution almost surely. The next step is to partition $\wt{\X}_{(\alpha)}$ into two blocks of dimensions $p\times\lfloor n^\alpha \rfloor$ and $p\times (n-\lfloor n^\alpha \rfloor)$, respectively. If we denote these two blocks by $\wt{\X}_{(\alpha)}^1$ and $\wt{\X}_{(\alpha)}^2$, i.\,e.\, $\wt{\X}_{(\alpha)}=\left[\wt{\X}_{(\alpha)}^1 \,\wt{\X}_{(\alpha)}^2\right]$, then clearly $\wt{\X}_{(\alpha)}\wt{\X}_{(\alpha)}^\T=\wt{\X}_{(\alpha)}^1\left(\wt{\X}_{(\alpha)}^1\right)^\T+\wt{\X}_{(\alpha)}^2\left(\wt{\X}_{(\alpha)}^2\right)^\T$, and an application of \citet[Theorem A.43]{bai2010} yields that
\begin{align*}
\sup_{\lambda\in\R_{\geq 0}}\left|F^{p^{-1}\X\X^\T}([0,\lambda])-F^{p^{-1}\wt{\X}_{(\alpha)}^2\left(\wt{\X}_{(\alpha)}^2\right)^\T}([0,\lambda])\right|\leq\frac{1}{p}\mathrm{rank}\left(\wt{\X}_{(\alpha)}^1\left(\wt{\X}_{(\alpha)}^1\right)^\T\right)\leq \frac{1}{p}\min\left(\lfloor n^\alpha \rfloor,p\right)=O\left(p^{-1}n^\alpha\right)\to 0.
\end{align*}
It therefore suffices to derive the limiting spectral distribution of $p^{-1}\wt{\X}_{(\alpha)}^2\left(\wt{\X}_{(\alpha)}^2\right)^\T$. Since the matrix $\wt{\X}_{(\alpha)}^2$ has independent rows, this could be done by a careful adaptation of the arguments given in \citet{pfaffel2010}. We chose, however, to provide a self\hyp{}contained proof, which also provides intermediate results of independent interest like \cref{proposition-whX}, and we therefore omit the lengthy details of this alternative proof.


\end{document}